
\documentclass[12pt,reqno, draft]{amsart}

\usepackage[utf8]{inputenc}
\usepackage{amsmath}
\usepackage{amsthm}
\usepackage{ulem}
\usepackage{enumitem}

\makeatletter
\newcommand{\myitem}[1][]{
  \protected@edef\@currentlabel{#1}%
\item[#1]
}
\makeatother

\DeclareMathOperator{\modf}{mod}
\DeclareMathOperator{\im}{Im}

\newcommand{\HomT}{\T}

\newcommand{\D}{\mathscr{D}}

\newcommand{\T}{\mathscr{T}}

\usepackage{tikz}
\usetikzlibrary{arrows,decorations.pathmorphing,decorations.pathreplacing, decorations.markings, matrix, patterns} 


\usepackage{mathdots}

\usepackage{color}

%
\usepackage{pdfsync}

\usepackage{enumitem}

\usepackage{wasysym}

%
\usepackage{amssymb}


%

%
\usepackage{mathrsfs}

%

%

%
\DeclareMathAlphabet{\mathpzc}{OT1}{pzc}{m}{it}

%
\usepackage[all]{xy}

%
\usepackage{tikz}
\usetikzlibrary{arrows,matrix,decorations.pathmorphing,decorations.pathreplacing,positioning,shapes.geometric,shapes.misc,decorations.markings,decorations.fractals,calc,patterns}

\usepackage{graphicx}


\usepackage{float}

\usepackage[bottom]{footmisc}

\usepackage{moreenum}

\usepackage{makecell}

%
\entrymodifiers={+!!<0pt,\fontdimen22\textfont2>}

\setlength{\textwidth}{182mm}
\setlength{\textheight}{249mm}
\addtolength{\oddsidemargin}{-2.7cm}
\addtolength{\evensidemargin}{-2.7cm}
\addtolength{\topmargin}{-22mm}

\raggedbottom


%

\def\cC{\mathscr{C}}
\def\cD{\mathscr{D}}

\def\cO{\mathscr{O}}

\def\cT{\mathscr{T}}

%

%

%

%


%

%


%

\def\add{\operatorname{add}}

\def\adots{\mathinner{\mkern1mu\raise1.0pt\vbox{\kern7.0pt\hbox{.}}\mkern2mu\raise4.0pt\hbox{.}\mkern2mu\raise7.0pt\hbox{.}\mkern1mu}}

\def\D{\cD}

\def\dddots{\mathinner{\mkern1mu\raise10.0pt\vbox{\kern7.0pt\hbox{.}}\mkern2mu\raise5.3pt\hbox{.}\mkern2mu\raise1.0pt\hbox{.}\mkern1mu}}
\def\dddotssmall{\mathinner{\mkern1mu\raise7.0pt\vbox{\kern7.0pt\hbox{.}}\mkern-1mu\raise4pt\hbox{.}\mkern-1mu\raise1.0pt\hbox{.}\mkern1mu}}

\def\dim{\operatorname{dim}}

\def\dual{\operatorname{D}}
\def\End{\operatorname{End}}

\def\Ext{\operatorname{Ext}}

\def\Hom{\operatorname{Hom}}

\def\inj{\operatorname{inj}}

\def\Iso{\operatorname{Iso}}
\def\iso{\operatorname{iso}}

\def\mod{\operatorname{mod}}

\def\proj{\operatorname{proj}}

\def\PSL2{\operatorname{PSL}_2}

\def\SL2{\operatorname{SL}_2}

%
\numberwithin{equation}{section}


%


%

%
\newtheorem{Lemma}{Lemma}[section]
\newtheorem{Theorem}[Lemma]{Theorem}
\newtheorem{Proposition}[Lemma]{Proposition}
\newtheorem{Corollary}[Lemma]{Corollary}

\theoremstyle{definition}
\newtheorem{Definition}[Lemma]{Definition}
\newtheorem{Setup}[Lemma]{Setup}

\newtheorem{Remark}[Lemma]{Remark}

\newtheorem*{bfhpg*}{}

%
  {\begin{list}{}{%
    \settowidth{\labelwidth}{\textbf{#1:}}%
    \setlength{\leftmargin}{\labelwidth}\addtolength{\leftmargin}{\labelsep}}}%
  {\end{list}}

\begin{document}

\normalem
\setlength{\parindent}{0pt}
\setlength{\parskip}{7pt}

\title{Maximal $\tau_d$-rigid pairs}

\author{Karin M.\ Jacobsen}

\address{Jacobsen: Norwegian University of Science and Technology, Department of Mathematical Sciences, Sentralbygg 2, Gl\o shaugen, 7491 Trondheim, Norway}
\email{kjacobsen@math.uni-bielefeld.de}

\curraddr{Fakult\"{a}t f\"{u}r Mathematik, Universit\"{a}t Bielefeld, 33501 Bielefeld, Germany}

\author{Peter J\o rgensen}
\address{J\o rgensen: School of Mathematics and Statistics,
Newcastle University, Newcastle upon Tyne NE1 7RU, United Kingdom}
\email{peter.jorgensen@ncl.ac.uk}
\urladdr{http://www.staff.ncl.ac.uk/peter.jorgensen}


\keywords{$d$-abelian category, $( d+2 )$-angulated category, higher homological algebra, maximal $d$-rigid object, maximal $\tau_d$-rigid pair}  

\subjclass[2010]{16G10, 18E10, 18E30}


\begin{abstract} 

Let $\cT$ be a $2$-Calabi--Yau triangulated category, $T$ a cluster tilting object with endomorphism algebra $\Gamma$.  Consider the functor $\cT( T,- ) : \cT \rightarrow \mod \Gamma$.  It induces a bijection from the isomorphism classes of cluster tilting objects to the isomorphism classes of support $\tau$-tilting pairs.  This is due to Adachi, Iyama, and Reiten.

\medskip
\noindent
The notion of $( d+2 )$-angulated categories is a higher analogue of triangulated categories.  We show a higher analogue of the above result, based on the notion of maximal $\tau_d$-rigid pairs.  

\end{abstract}

\maketitle

\setcounter{section}{-1}
\section{Introduction}
\label{sec:introduction}

In triangulated categories, the notions of {\em cluster tilting objects} (introduced in \cite[p.\ 583]{BMRRT}) and {\em maximal rigid objects} have recently been extensively investigated.  They frequently coincide, by \cite[thm.\ 2.6]{ZZ}, and they are closely linked to the notion of {\em support $\tau$-tilting pairs} in abelian categories (introduced in \cite[def.\ 0.3]{AIR}).  Indeed, there is often a bijection between the cluster tilting objects in a triangulated category and the support $\tau$-tilting pairs in a suitable (abelian) module category, see \cite[thm.\ 4.1]{AIR}.

This paper investigates the analogous theory in $( d+2 )$-angulated and $d$-abelian categories, which are the main objects of higher homological algebra, see \cite[def.\ 2.1]{GKO} and \cite[def.\ 3.1]{Jasso}.  Several key properties from the classic case do not carry over. For example, cluster tilting objects are maximal $d$-rigid, but the converse is rarely true.  Moreover, the higher analogue of support $\tau$-rigid pairs permit a bijection to the maximal $d$-rigid objects, but not to the cluster tilting objects.

For further reading in higher homological algebra a number of references have been included in the bibliography, see \cite{BT}, \cite{F1}, \cite{F2}, \cite{GKO}, \cite{HI1}, \cite{HI2}, \cite{Iyama}, \cite{Iyama2}, \cite{IO2}, \cite{JJ}, \cite{Jasso}, \cite{JKu1}, \cite{JKu2}, \cite{JKv}, \cite{McM}, \cite{M}, \cite{OT}.

Let $k$ be an algebraically closed field, $d \geqslant 1$ an integer, \(\T\) a $k$-linear $\Hom$-finite \((d+2)\)-angulated category with split idempotents, see \cite[def.\ 2.1]{GKO}.  Assume that $\cT$ is $2d$-Calabi--Yau, see \cite[def.\ 5.2]{OT}, and let $\Sigma^d$ denote the \(d\)-suspension functor of $\cT$.

{\bf Cluster tilting and maximal $d$-rigid objects. }  An object $X \in \cT$ is {\em $d$-rigid} if $\Ext^d_{ \cT }( X,X ) = 0$.    We recall three important definitions.

\begin{Definition}[{\cite[def.\ 5.3]{OT}}]
\label{def:clusterTilting}
An object \(X\in\T\) is {\em Oppermann--Thomas cluster tilting in $\cT$} if:
\begin{enumerate}
\setlength\itemsep{4pt}

  \item $X$ is $d$-rigid.

  \item For any \(Y\in\T\) there exists a \((d+2)\)-angle 
\[
  X_d\rightarrow \cdots \rightarrow X_0\rightarrow Y\rightarrow \Sigma^d X_d
\]
with \(X_i\in\add X\) for all \(0\leq i\leq d\).
\end{enumerate}
\end{Definition}

\begin{Definition}
\label{def:self-orthogonal}
An object $X \in \cT$ is {\em $d$-self-perpendicular in $\cT$} if
\[
  \add X = \{\, Y \in \cT \mid \Ext^d_{ \cT }( X,Y ) = 0 \,\}.
\]
\end{Definition}

\begin{Definition}
\label{def:maxRigid}
An object \(X\in\T\) is \emph{maximal \(d\)-rigid in $\cT$} if
\[
  \add X = \{\, Y \in \T \mid \Ext^d_\T( X\oplus Y,X\oplus Y )=0 \,\}.
\]
\end{Definition}

Our first main result is:

{\bf Theorem A. }
{\em
$X$ is Oppermann--Thomas cluster tilting $\Rightarrow$ $X$ is $d$-self-perpendicular $\Rightarrow$ $X$ is maximal $d$-rigid.
}

We prove this in Theorem \ref{thm:cluster_tilting_v_maximal_rigid}.  Of equal importance is that the implications cannot be reversed in general, see Remark \ref{rmk:cluster_tilting_v_maximal_rigid}.  In particular, when $d \geqslant 2$, the class of maximal $d$-rigid objects is typically strictly larger than the class of Oppermann--Thomas cluster tilting objects, in contrast to the classic case $d = 1$ where the two classes usually coincide, see \cite[thm.\ 2.6]{ZZ}.

{\bf Maximal $\tau_d$-rigid pairs}.  Let $T \in \cT$ be an Oppermann--Thomas cluster tilting object and let $\Gamma = \End_{ \cT }( T )$.  Recall the following result.

\begin{Theorem}[{\cite[thm.\ 0.6]{JJ}}]
\label{thm:JJ1main}
Consider the essential image $\cD$  of the functor $\cT( T,- ) : \cT \rightarrow \mod \Gamma$.  Then $\cD$ is a $d$-cluster tilting subcategory of $\mod \Gamma$.  There is a commutative diagram, as shown below, where the vertical arrow is the quotient functor and the diagonal arrow is an equivalence of categories:

\centering
\begin{tikzpicture}[scale=2.5]
\node (T) at (0,1) {\(\T\)};
\node (Tadd) at (0,0) {\(\T/\add\Sigma^d T\).};
\node (D) at (1,1) {\(\D\)};
\draw[->>] (T)--node[left]{$\scriptstyle \overline{ ( - ) }$} (Tadd);
\draw[->] (T)--node[above]{\(\scriptstyle{\HomT(T,-)}\)} (D);
\draw[->] (Tadd)-- node[above, sloped] {\(\sim\)} (D);
\end{tikzpicture}
\end{Theorem}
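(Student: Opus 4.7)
The plan is first to factor $\cT(T,-)$ through the quotient, next to show the factorization is an equivalence onto $\cD$, and finally to verify that $\cD$ is $d$-cluster tilting in $\mod\Gamma$.

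The factorization is immediate: since $T$ is $d$-rigid, one has $\cT(T,\Sigma^d T) = \Ext^d_\cT(T,T) = 0$, so $\cT(T,-)$ vanishes on $\add\Sigma^d T$ and descends to a functor $\overline{\cT(T,-)} \colon \cT/\add\Sigma^d T \to \mod\Gamma$ whose essential image is $\cD$ by definition. For the equivalence, essential surjectivity onto $\cD$ is automatic. For faithfulness, take $f \colon X \to Y$ with $\cT(T,f)=0$ and let
\[
T_d \to \cdots \to T_0 \xrightarrow{g} X \xrightarrow{\delta} \Sigma^d T_d
\]
be an Oppermann--Thomas resolution of $X$. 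The hypothesis forces $fg = 0$, so the $(d+2)$-angulation axioms produce $\alpha \colon \Sigma^d T_d \to Y$ with $f = \alpha\delta$, and hence $f$ factors through $\add\Sigma^d T$. Fullness follows from a standard lifting argument along Oppermann--Thomas resolutions of $X$ and $Y$, using that the map $T_0 \to Y$ is a right $\add T$-approximation: applying $\cT(T,-)$ to the resolution of $Y$ yields $\cT(T,T_0) \to \cT(T,Y) \to \cT(T,\Sigma^d T_d) = 0$, the last vanishing being $\Ext^d_\cT(T,T_d) = 0$ by $d$-rigidity.

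The main obstacle is showing that $\cD$ is $d$-cluster tilting in $\mod\Gamma$. The key input is that applying $\cT(T,-)$ to the Oppermann--Thomas resolution of any $X \in \cT$ produces a projective resolution of length at most $d$,
\[
0 \to \cT(T,T_d) \to \cdots \to \cT(T,T_0) \to \cT(T,X) \to 0,
\]
whose exactness in the middle and on the right follows from $d$-rigidity of $T$, while exactness on the far left requires the $2d$-Calabi--Yau duality to control the would-be cokernel term $\cT(T,\Sigma^{-d}X)$. These resolutions immediately yield $\Ext^i_\Gamma(\cD,\cD) = 0$ for $1 \leqslant i \leqslant d-1$, and functorial finiteness of $\cD$ in $\mod\Gamma$ comes from the right $\add T$-approximations built into the resolutions (together with the existence of the dual left approximations obtained from $2d$-Calabi--Yau duality).

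The delicate step is maximality: given $M \in \mod\Gamma$ with $\Ext^i_\Gamma(\cD,M) = 0$ for $1 \leqslant i \leqslant d-1$, the plan is to lift a projective presentation $\cT(T,T_1) \to \cT(T,T_0) \to M \to 0$ to a morphism $T_1 \to T_0$ in $\cT$, complete it to a $(d+2)$-angle, and then use the intermediate Ext-vanishing together with the equivalence already established to identify $M$ with $\cT(T,Z)$ for the remaining object $Z$ of the angle; the symmetric left-Ext condition is obtained from $2d$-Calabi--Yau duality, which interchanges the two Ext-vanishing conditions. Threading the purely module-theoretic Ext-vanishing back into the $(d+2)$-angulated structure via Calabi--Yau duality is the technical heart of the argument.
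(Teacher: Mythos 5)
This statement is not proved in the paper at all: it is quoted verbatim from \cite[thm.\ 0.6]{JJ}, so there is no in-paper proof to compare against. Judged on its own terms, your outline of the equivalence $\cT/\add\Sigma^d T \xrightarrow{\sim} \cD$ is sound: the factorisation through the quotient using $d$-rigidity of $T$, the faithfulness argument via the weak-cokernel property of $(d+2)$-angles applied to an Oppermann--Thomas resolution, and the lifting argument for fullness are all correct and are essentially the standard route.

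The $d$-cluster-tilting part, however, contains two genuine gaps. First, your ``key input'' is false: applying $\cT(T,-)$ to an Oppermann--Thomas resolution does \emph{not} produce a projective resolution of length at most $d$. What \cite[prop.\ 2.5(a)]{GKO} gives is an exact sequence $\cT(T,\Sigma^{-d}X)\rightarrow\cT(T,T_d)\rightarrow\cdots\rightarrow\cT(T,T_0)\rightarrow\cT(T,X)\rightarrow 0$, and the term $\cT(T,\Sigma^{-d}X)\cong\cT(\Sigma^d T,X)$ is not killed by the $2d$-Calabi--Yau property, so exactness at $\cT(T,T_d)$ genuinely fails. Already for $d=1$ your claim would force every cluster-tilted algebra to be hereditary, whereas cluster-tilted algebras typically have infinite global dimension; this is exactly why Lemma \ref{lem:resolution} of the present paper is careful to claim only ``the start of'' the minimal projective resolution. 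The vanishing $\Ext^i_\Gamma(\cD,\cD)=0$ for $1\leqslant i\leqslant d-1$ does survive, since it only uses the exact portion of the sequence up to homological degree $d$, but the step must be rephrased accordingly. Second, and more seriously, the maximality step does not work as described: in a $(d+2)$-angulated category the ``cone'' of a morphism $T_1\rightarrow T_0$ is a string of $d$ objects $C_2,\dots,C_{d+1}$, not a single ``remaining object $Z$'', and there is no reason for $\cT(T,-)$ applied to any one of them to recover $M$. Producing an object $Z\in\cT$ with $\cT(T,Z)\cong M$ from the hypothesis that $\Ext^i_\Gamma(\cD,M)=0$ for $1\leqslant i\leqslant d-1$ is precisely the technical heart of the cited proof, and your sketch supplies no argument for it; functorial finiteness of $\cD$ is likewise asserted rather than proved.
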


The category $\cD$ is a $d$-abelian category by \cite[thm.\ 3.16]{Jasso}.  It has a $d$-Auslander--Reiten translation $\tau_d$, which is a higher analogue of the classic Auslander--Reiten translation $\tau$, see \cite[sec.\ 1.4.1]{Iyama2}.  A module \(M\in \D\) is called \emph{\(\tau_d\)-rigid} if \(\Hom_\Gamma(M,\tau_d M)=0\).

\begin{Remark}\label{rem:add-proj}
The classic \(\add\)-\(\proj\)-correspondence holds, as \(\HomT(T,-)\) restricts to an equivalence \(\add T\rightarrow \proj \Gamma\) . The functor also restricts to an equivalence \(\add ST\rightarrow \inj \Gamma\). \cite[lem.\ 2.1]{JJ}
\end{Remark}

It is natural to ask if $\cD$ permits a higher analogue of the $\tau$-tilting theory of \cite{AIR}.  We will not answer this question, but will instead introduce the following definitions inspired by it.

\begin{Definition}
\label{def:tau_d-rigid_pair}
A pair $( M,P )$ with $M \in \cD$ and $P \in \proj \Gamma$ is called a {\em $\tau_d$-rigid pair in $\cD$} if $M$ is $\tau_d$-rigid and $\Hom_{ \Gamma }( P,M ) = 0$. 
\end{Definition}

\begin{Definition}
\label{def:maximal_tau_d-rigid_pair}
A pair $( M,P )$ with $M \in \cD$ and $P \in \proj \Gamma$ is called a {\em maximal $\tau_d$-rigid pair in $\cD$} if it satisfies:
\begin{enumerate}
\item If \(N\in \D\) then
\[
  N \in \add M
  \Leftrightarrow
  \left\{
    \begin{array}{l}
      \Hom_\Gamma(M,\tau_d N) = 0, \\[1mm]
      \Hom_\Gamma(N,\tau_d M) = 0,\\[1mm]
      \Hom_\Gamma(P,N) = 0.
    \end{array}
  \right.
\]

\item If \(Q\in \proj\Gamma\), then
\[
  Q\in \add P \Leftrightarrow \Hom_\Gamma(Q,M)=0.
\]

\end{enumerate}
\end{Definition}

A maximal $\tau_d$-rigid pair is a $\tau_d$-rigid pair.  

Our second main result is:

{\bf Theorem B. }
{\em 
If each indecomposable object of $\cT$ is $d$-rigid, then there is a bijection
\[  
  \left\{
    \begin{array}{cc}
      \mbox{isomorphism classes of } \\
      \mbox{maximal $d$-rigid objects in $\cT$}
    \end{array}
  \right\}
  \rightarrow
  \left\{
    \begin{array}{cc}
      \mbox{isomorphism classes of} \\
      \mbox{maximal $\tau_d$-rigid pairs in $\cD$}       
    \end{array}
  \right\}.
\]
}

We prove this in Section \ref{sec:tau-TO}.  If $d = 1$, then $( M,P )$ is a maximal $\tau_1$-rigid pair if and only if it is a support $\tau$-tilting pair in the sense of \cite[def.\ 0.3(b)]{AIR}, see \cite[def.\ 0.3, prop.\ 2.3, and cor.\ 2.13]{AIR}.  Hence Theorem B is a higher analogue of the bijection
\[  
  \left\{
    \begin{array}{cc}
      \mbox{isomorphism classes of } \\
      \mbox{cluster tilting object in $\cT$}
    \end{array}
  \right\}
  \rightarrow
  \left\{
    \begin{array}{cc}
      \mbox{isomorphism classes of} \\
      \mbox{support $\tau$-tilting pairs in $\mod \Gamma$}
    \end{array}
  \right\}
\]
which exists by \cite[thm.\ 4.1]{AIR} when $\cT$ is triangulated, i.e.\ in the case $d = 1$.  However, when $d \geqslant 2$, we do not think of maximal $\tau_d$-rigid pairs as support $\tau_d$-tilting pairs.  The reason is that by Theorem B, maximal $\tau_d$-rigid pairs are linked to maximal $d$-rigid objects in higher angulated categories.  As remarked above, this class is typically strictly larger than the class of Oppermann--Thomas cluster tilting objects when $d \geqslant 2$.

Note that \cite{McM} makes an approach to higher support tilting theory.

This paper is organised as follows: Section \ref{sec:A} proves Theorem A, 
Section \ref{sec:AR-theory} investigates the precise relation between $\Hom$ spaces in $\cT$ and $\cD$, Section \ref{sec:tau-TO} proves Theorem B, and Section \ref{sec:example} gives an example.

\begin{Setup}
\label{set:blanket}
Throughout the paper we use the following notation:
\begin{description}[align=right, labelwidth=1cm]
\item[\(k\)] An algebraically closed field.
\item[\(\dual\)] The duality functor \(\Hom_k(-,k)\).
\item[\(\T\)] A \(k\)-linear, $\Hom$-finite, \((d+2)\)-angulated category with split idempotents.  We assume that $\cT$ is \(2d\)-Calabi--Yau, that is $\cT( X,Y ) \cong \dual\!\cT( Y,\Sigma^{ 2d }X )$ naturally in $X,Y \in \cT$.
\item[\( \Sigma^d \)] The \(d\)-suspension functor on \(\T\).
\item[\(T\)] An Oppermann--Thomas cluster tilting object in \(\T\).
\item[{$\overline{( - )}$}] The canonical functor $\cT \rightarrow \cT / \add \Sigma^d T$, whose target is the naive quotient category of $\cT$ modulo the morphisms which factor through an object in $\add \Sigma^d T$.
\item[\(\Gamma\)] The endomorphism ring \(\End_\T(T)\).
\item[\(\nu_\Gamma\)] The Nakayama functor on \(\modf \Gamma\).
\item[$\tau_d$] The $d$-Auslander--Reiten translation on $\mod \Gamma$.  
\item[\(\D\)] The essential image of the functor \(\HomT(T,-):\T\rightarrow \modf\Gamma\).
\end{description}
\end{Setup}

\section{Proof of Theorem A}
\label{sec:A}

\begin{Theorem}
\label{thm:cluster_tilting_v_maximal_rigid}
Let $X \in \cT$ be given.  
\begin{enumerate}
\setlength\itemsep{4pt}

  \item  There are implications
\[
  \begin{array}{c}
    \mbox{$X$ is Oppermann--Thomas cluster tilting} \\
    \Downarrow \\
    \mbox{$X$ is $d$-self-perpendicular} \\
    \Downarrow \\ 
    \mbox{$X$ is maximal $d$-rigid} \\
    \Downarrow \\
    \mbox{$X$ is $d$-rigid.}
  \end{array}
\]

  \item  If each indecomposable object in $\cT$ is $d$-rigid, then 
\[
  \mbox{ $X$ is $d$-self-perpendicular $\Leftrightarrow$ $X$ is maximal $d$-rigid. }
\]

\end{enumerate}
\end{Theorem}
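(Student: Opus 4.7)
The plan is to handle the three implications of (i) in easy-to-hard order and then derive the converse in (ii) using the $2d$-Calabi--Yau symmetry $\Ext^d_{\cT}(Y,X)\cong\dual\Ext^d_{\cT}(X,Y)$, which follows from $\cT(X,Y)\cong\dual\cT(Y,\Sigma^{2d}X)$ by shifting.

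The bottom and middle implications of (i) are formal. If $X$ is maximal $d$-rigid, then $X\in\add X$ forces $\Ext^d_{\cT}(X\oplus X,X\oplus X)=0$ by Definition 0.4, and $\Ext^d_{\cT}(X,X)$ is a summand, giving $d$-rigidity. If $X$ is $d$-self-perpendicular, the same observation yields $d$-rigidity; moreover, for $Y\in\add X$ every summand of $\Ext^d_{\cT}(X\oplus Y, X\oplus Y)$ is a summand of some $\Ext^d_{\cT}(X,X)^{\oplus n}=0$, and conversely $\Ext^d_{\cT}(X\oplus Y,X\oplus Y)=0$ forces $\Ext^d_{\cT}(X,Y)=0$, hence $Y\in\add X$ by Definition 0.2.

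The core step is the top implication, that Oppermann--Thomas cluster tilting implies $d$-self-perpendicular. The inclusion $\add X\subseteq\{Y\mid\Ext^d_{\cT}(X,Y)=0\}$ is immediate from $d$-rigidity. For the reverse, assume $\Ext^d_{\cT}(X,Y)=0$. The $2d$-Calabi--Yau property transfers this to $\Ext^d_{\cT}(Y,X)=0$, so in particular $\Ext^d_{\cT}(Y,X_d)=0$ for the $(d+2)$-angle
\[
X_d\rightarrow\cdots\rightarrow X_0\xrightarrow{f} Y\rightarrow\Sigma^d X_d
\]
supplied by Definition 0.1(ii). Applying $\cT(Y,-)$ produces a long exact sequence whose relevant piece $\cT(Y,X_0)\rightarrow\cT(Y,Y)\rightarrow\cT(Y,\Sigma^d X_d)=0$ shows that $1_Y$ lifts along $f$, exhibiting $Y$ as a direct summand of $X_0\in\add X$.

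Part (ii) only requires the direction \emph{maximal $d$-rigid} $\Rightarrow$ \emph{$d$-self-perpendicular}. Given $\Ext^d_{\cT}(X,Y)=0$, decompose $Y=\bigoplus Y_j$ into indecomposables; by hypothesis each $Y_j$ is $d$-rigid, the vanishing is inherited as $\Ext^d_{\cT}(X,Y_j)=0$, and $2d$-Calabi--Yau duality supplies $\Ext^d_{\cT}(Y_j,X)=0$. Combined with $d$-rigidity of $X$ (which follows from maximal $d$-rigidity via the bottom step of (i)), this gives $\Ext^d_{\cT}(X\oplus Y_j, X\oplus Y_j)=0$, so $Y_j\in\add X$ by maximality and hence $Y\in\add X$. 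The principal obstacle throughout is variance: the top implication of (i) cannot be carried out by naively applying $\cT(X,-)$ to the $(d+2)$-angle — the issue is that surjectivity of $\cT(X,X_0)\rightarrow\cT(X,Y)$ does not split $f$ — and it is the $2d$-Calabi--Yau symmetry that legitimises turning the argument around by applying $\cT(Y,-)$ instead.
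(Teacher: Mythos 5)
Your argument is correct, and parts (ii) and the lower two implications of (i) match the paper's proof essentially verbatim. The one place you genuinely diverge is the key first implication of (i) (Oppermann--Thomas cluster tilting $\Rightarrow$ $d$-self-perpendicular). The paper applies Definition \ref{def:clusterTilting}(ii) to the object $\Sigma^d Y$, obtaining a $(d+2)$-angle $X_d \rightarrow \cdots \rightarrow X_0 \rightarrow \Sigma^d Y \rightarrow \Sigma^d X_d$; the map $X_0 \rightarrow \Sigma^d Y$ lies in $\Ext^d_{\cT}(X_0,Y)=0$, so the next map $\Sigma^d Y \rightarrow \Sigma^d X_d$ is a split monomorphism and $Y$ is a summand of $X_d$. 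You instead resolve $Y$ itself, use the $2d$-Calabi--Yau duality to convert $\Ext^d_{\cT}(X,Y)=0$ into $\Ext^d_{\cT}(Y,X_d)=0$, and then lift $1_Y$ through $\cT(Y,X_0)\rightarrow\cT(Y,Y)\rightarrow\cT(Y,\Sigma^d X_d)=0$ (exact by \cite[prop.\ 2.5(a)]{GKO}) to split $f$, landing $Y$ as a summand of $X_0$. Both are valid; what the paper's version buys is that this implication needs no Calabi--Yau hypothesis at all and so holds in an arbitrary $(d+2)$-angulated category containing an Oppermann--Thomas cluster tilting object, whereas your version consumes the standing $2d$-Calabi--Yau assumption of Setup \ref{set:blanket} (harmless here, but a real loss of generality). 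Your closing remark about variance is well taken, though note you could also have argued directly, as the paper does, that the vanishing of the connecting morphism $Y\rightarrow\Sigma^d X_d$ already forces $X_0\rightarrow Y$ to be a split epimorphism, without passing through the long exact sequence.
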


\begin{proof}
(i), the first implication: Suppose $X$ is Oppermann--Thomas cluster tilting.  We must prove the equality in Definition \ref{def:self-orthogonal}, and the inclusion  $\subseteq$ is clear.  For the inclusion $\supseteq$, suppose $\Ext^d_{ \cT }( X,Y ) = 0$.  Then each morphism $X_0 \rightarrow \Sigma^d Y$ with $X_0 \in \add X$ is zero.  This applies in particular to the $( d+2 )$-angle $X_d \rightarrow \cdots \rightarrow X_0 \rightarrow \Sigma^d Y \rightarrow \Sigma^d X_d$ with $X_i \in \add X$, which exists since $X$ is Oppermann--Thomas cluster tilting.  But then the morphism $\Sigma^d Y \rightarrow \Sigma^d X_d$ is a split monomorphism, and applying $\Sigma^{ -d }$ gives a split monomorphism $Y \rightarrow X_d$ proving $Y \in \add X$.  

(i), the second implication:  Suppose that $X$ is $d$-self-perpendicular.  We must prove the equality in Definition \ref{def:maxRigid}, and the inclusion $\subseteq$ is clear.  For the inclusion $\supseteq$, suppose \(\Ext^d_\T( X \oplus Y,X \oplus Y)=0\).  Then in particular, \(\Ext^d_\T(X,Y)=0\), whence \(Y\in \add X\).

(i), the third implication: This is clear.

(ii): Suppose that each indecomposable object in $\cT$ is $d$-rigid.
Because of part (i), it is enough to prove the implication $\Leftarrow$ in (ii), so suppose that $X$ is maximal $d$-rigid.  We must prove the equality in Definition \ref{def:self-orthogonal}, and $\subseteq$ is clear.

For the inclusion $\supseteq$, observe that $\{\, Y \in \cT \mid \Ext^d_{ \cT }( X,Y ) = 0 \,\}$ is closed under direct sums and summands by additivity of \(\Ext\).  Hence it is enough to suppose that $Y$ is an indecomposable object in this set and prove $Y \in \add X$.  However, $\Ext^d_{ \cT }( X,Y ) = 0$ implies $\Ext^d_{ \cT }( Y,X ) = 0$ because $\cT$ is $2d$-Calabi--Yau, and $\Ext^d_{ \cT }( Y,Y ) = 0$ by assumption.  Finally, $X$ is $d$-rigid by part (i), so $\Ext^d_{ \cT }( X,X ) = 0$.  Combining these equalities shows $\Ext^d_{ \cT }( X \oplus Y,X \oplus Y ) = 0$, and $Y \in \add X$ follows. 
\end{proof}

\begin{Remark}
\label{rmk:cluster_tilting_v_maximal_rigid}
The implications in Theorem \ref{thm:cluster_tilting_v_maximal_rigid}(i) cannot be reversed in general:
\begin{itemize}[label=\textendash]
\item An example of a $d$-self-perpendicular object $X$ which is not Oppermann--Thomas cluster tilting is given in Section \ref{sec:example}.  In fact, the objects in the last three rows of Figure \ref{fig:example} are such examples. 
The example was originally given in \cite[p.\ 1735]{OT}.

\item An example of a maximal $d$-rigid object which is not $d$-self-perpendicular can be obtained by combining proposition 2.6 and corollary 2.7 in \cite{BMV}.  These results give a maximal $1$-rigid object which is not cluster tilting, but in the triangulated setting of \cite{BMV}, cluster tilting is equivalent to $1$-self-perpendicular, see \cite[bottom of p.\ 963]{BMV}.  

\item Finally, an example of a $d$-rigid object which is not maximal $d$-rigid is the zero object, as soon as $\cT$ has a non-zero $d$-rigid object.
\end{itemize}
\end{Remark}

We end the section by observing that Theorem \ref{thm:cluster_tilting_v_maximal_rigid}(ii) can be applied to an important class of categories.

\begin{Proposition}
\label{pro:indecomposables_are_rigid}
Let $\Lambda$ be a $d$-representation finite algebra, $\cO_{ \Lambda }$ the $( d+2 )$-angulated cluster category associated to $\Lambda$ in \cite[thm.\ 5.2]{OT}.  Then each $X \in \cO_{ \Lambda }$ satisfies 
\[
  \mbox{ $X$ is $d$-self-perpendicular $\Leftrightarrow$ $X$ is maximal $d$-rigid. }
\]
\end{Proposition}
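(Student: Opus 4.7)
In view of Theorem \ref{thm:cluster_tilting_v_maximal_rigid}(ii), the proposition will follow as soon as we show that every indecomposable object of the cluster category $\cO_{\Lambda}$ is $d$-rigid. So the whole task reduces to producing a set-of-indecomposables description for $\cO_{\Lambda}$ and checking $d$-rigidity for each member.

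The plan is to use the construction of $\cO_{\Lambda}$ from \cite[thm.\ 5.2]{OT}. For a $d$-representation finite algebra $\Lambda$, let $M \in \mod \Lambda$ be the (essentially unique) basic $d$-cluster tilting module, and let $\cM = \add M$. By construction, $M$ is an Oppermann--Thomas cluster tilting object in $\cO_{\Lambda}$, and every indecomposable object of $\cO_{\Lambda}$ is isomorphic to $\Sigma^{id} N$ for some $i \in \mathbb{Z}$ and some indecomposable $N \in \cM$. (This is the analogue of the classical description of indecomposables of a cluster category; it is essentially built into the definition of $\cO_{\Lambda}$ as an orbit category.)

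Once this description is in hand, the remainder is immediate. Since $M$ is Oppermann--Thomas cluster tilting, Theorem \ref{thm:cluster_tilting_v_maximal_rigid}(i) gives that $M$ is $d$-rigid, and hence every indecomposable summand $N$ of $M$ is $d$-rigid (rigidity passes to direct summands by additivity of $\Ext$). Using the $2d$-Calabi--Yau property of $\cO_{\Lambda}$, one computes
\[
  \Ext^d_{\cO_{\Lambda}}(\Sigma^{id} N, \Sigma^{id} N)
  \cong \cO_{\Lambda}(\Sigma^{id} N, \Sigma^{(i+1)d} N)
  \cong \cO_{\Lambda}(N, \Sigma^{d} N)
  = \Ext^d_{\cO_{\Lambda}}(N, N) = 0,
\]
so every object of the form $\Sigma^{id} N$ is $d$-rigid. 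Thus every indecomposable of $\cO_{\Lambda}$ is $d$-rigid, and Theorem \ref{thm:cluster_tilting_v_maximal_rigid}(ii) applies.

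The only real point is the description of the indecomposables of $\cO_{\Lambda}$. This is where one must invoke the precise construction of $\cO_{\Lambda}$ as the higher cluster category of \cite{OT}; I expect this to be the main (though essentially bookkeeping) obstacle, and it can be handled with a direct reference rather than a calculation.
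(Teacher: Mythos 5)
Your reduction is exactly the paper's: show that every indecomposable object of $\cO_{\Lambda}$ is $d$-rigid and then invoke Theorem \ref{thm:cluster_tilting_v_maximal_rigid}(ii). The paper disposes of the first step by citing \cite[Lemma 5.41]{OT}; you instead try to derive it, and your derivation contains a genuine error. The claim that the basic $d$-cluster tilting module $M$ becomes an Oppermann--Thomas cluster tilting object in $\cO_{\Lambda}$ is false in general: what \cite[thm.\ 5.5]{OT} asserts is that the image of $\Lambda$ is Oppermann--Thomas cluster tilting, and the image of $M$ is typically not even $d$-rigid. Already for $d=1$, where $\Lambda$ is hereditary representation-finite, $M$ is an additive generator of $\mod\Lambda$ and its image in the cluster category is far from rigid. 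It also fails in the paper's own example (Section \ref{sec:example}): there the five indecomposables $1357,1358,1368,1468,2468$ are the ones mapping onto the $3$-cluster tilting subcategory $\add\{P(4),P(3),P(2),P(1),I(1)\}$ of $\mod\Gamma$, and Figure \ref{fig:forbidden} gives $\Ext^3_{\cT}(1357,2468)\neq 0$, so their direct sum is not $3$-rigid. Consequently the step ``$N$ is a summand of $M$, which is $d$-rigid, hence $N$ is $d$-rigid'' collapses; being a summand of a non-rigid object gives nothing.

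The conclusion you need is nevertheless true, and your argument can be repaired along the lines you sketch. By the construction of $\cO_{\Lambda}$ as an orbit category of $\add\{\,\nu_d^i\Lambda \mid i\in\BZ\,\}$ under $\nu_d^{-1}[d]$, every indecomposable of $\cO_{\Lambda}$ is isomorphic to $\Sigma^{di}P$ for some indecomposable projective $\Lambda$-module $P$ and some $i\in\BZ$. Since the image of $\Lambda$ is Oppermann--Thomas cluster tilting by \cite[thm.\ 5.5]{OT}, it is $d$-rigid by Theorem \ref{thm:cluster_tilting_v_maximal_rigid}(i), hence so is each $P$, hence so is $\Sigma^{di}P$ because $\Sigma^{di}$ is an autoequivalence (note that the $2d$-Calabi--Yau property plays no role in that computation). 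Alternatively, cite \cite[Lemma 5.41]{OT} directly for the $d$-rigidity of all indecomposables, which is what the paper does.
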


\begin{proof}
Each indecomposable in  $\cO_{ \Lambda }$ is $d$-rigid by \cite[Lemma 5.41]{OT}, so the equivalence follows from Theorem \ref{thm:cluster_tilting_v_maximal_rigid}(ii).  
\end{proof}

\section{A dimension formula for $\Ext^d_{ \cT }$}
\label{sec:AR-theory}

Recall from Setup \ref{set:blanket} that $T$ is a fixed Oppermann--Thomas cluster tilting object in $\cT$, and that $\cT$ is $2d$-Calabi--Yau, that is, $\cT( X,Y ) \cong \dual\!\cT( Y,\Sigma^{ 2d }X )$ naturally in $X,Y \in \cT$.

\begin{Lemma}\label{lem:Nakayama}
There is a natural isomorphism
\[
  \nu_\Gamma\HomT(T,T') \cong \HomT \big( T,\Sigma^{ 2d }(T') \big)
\]
for \(T'\in\add T\).
\end{Lemma}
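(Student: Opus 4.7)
The plan is to reduce the left-hand side to something of the form $D\cT(T',T)$ using the standard description of the Nakayama functor together with the add-proj-correspondence of Remark \ref{rem:add-proj}, and then to apply the $2d$-Calabi--Yau property of $\cT$ to reach the right-hand side. Recall that for a finite-dimensional algebra $\Gamma$, the Nakayama functor is $\nu_\Gamma = D\Hom_\Gamma(-,\Gamma)$, so it is enough to produce a chain of natural isomorphisms landing in $\cT(T,\Sigma^{2d}T')$.

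First I would compute, for $T'\in\add T$, the module $\Hom_\Gamma(\Hom_\cT(T,T'),\Gamma)$. Since $\Gamma = \End_\cT(T) = \Hom_\cT(T,T)$, and since the functor $\Hom_\cT(T,-)$ restricts to an equivalence $\add T \xrightarrow{\sim} \proj\Gamma$ by Remark \ref{rem:add-proj}, this fully faithful action yields a natural isomorphism
\[
\Hom_\Gamma\bigl(\Hom_\cT(T,T'),\,\Hom_\cT(T,T)\bigr) \;\cong\; \Hom_\cT(T',T).
\]
Applying the $k$-duality $D$ then gives $\nu_\Gamma\Hom_\cT(T,T') \cong D\Hom_\cT(T',T)$, naturally in $T'$.

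Next I would invoke the $2d$-Calabi--Yau property from Setup \ref{set:blanket}, which supplies a natural isomorphism $\cT(T',T)\cong D\cT(T,\Sigma^{2d}T')$. Dualising once more and using that $\Hom$-spaces in $\cT$ are finite-dimensional, so $DD\cong\id$ naturally, produces
\[
D\Hom_\cT(T',T) \;\cong\; DD\Hom_\cT(T,\Sigma^{2d}T') \;\cong\; \Hom_\cT(T,\Sigma^{2d}T').
\]
Composing the two displayed natural isomorphisms yields the desired $\nu_\Gamma\Hom_\cT(T,T') \cong \Hom_\cT(T,\Sigma^{2d}T')$.

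There is no substantial obstacle here: the argument is a concatenation of standard natural isomorphisms, and the only thing to be careful about is to track naturality in $T'$ at each step so that the final isomorphism is genuinely a natural transformation of functors on $\add T$ (and hence extends to the additive closure as needed). The role of the hypothesis $T'\in\add T$ is critical, since it is precisely what makes $\Hom_\cT(T,T')$ projective and thus makes the first step an application of the defining identity $\nu_\Gamma(P) = D\Hom_\Gamma(P,\Gamma)$ together with Yoneda via the equivalence $\add T \simeq \proj\Gamma$.
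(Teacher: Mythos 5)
Your proposal is correct and follows essentially the same route as the paper's proof: both combine the full faithfulness of $\cT(T,-)$ on $\add T$ (your Remark~\ref{rem:add-proj}, the paper's citation of \cite[Lemma 2.2(i)]{JJ}) with the $2d$-Calabi--Yau isomorphism and the description $\nu_\Gamma = \dual\Hom_\Gamma(-,\Gamma)$, merely traversing the chain of isomorphisms in the opposite direction. The extra step $\dual\dual \cong \id$ you invoke is harmless given $\Hom$-finiteness.
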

\begin{proof}
By the 2\(d\)-Calabi-Yau property we have
\[
  \HomT \big( T,\Sigma^{ 2d }(T') \big) \cong \dual\!\HomT(T',T).
\]
By \cite[Lemma 2.2(i)]{JJ},
\[
  \dual\!\HomT(T',T)\cong \dual\!\Hom_\Gamma \big( \HomT(T,T'),\HomT(T,T) \big) = \dual\!\Hom_\Gamma \big( \HomT(T,T'),\Gamma \big).
\]
Finally, by definition we have
\[
  \dual\!\Hom_\Gamma \big( \HomT(T,T'),\Gamma \big) = \nu_\Gamma\HomT(T,T'),
\]
see \cite[def.\ III.2.8]{bluebook1}.
\end{proof}

\begin{Lemma}
\label{lem:resolution}
If $X \in \cT$ has no non-zero direct summands in $\add \Sigma^d T$, then there exists a $( d+2 )$-angle
\[
  T_d \rightarrow \cdots \rightarrow T_0 \rightarrow X \rightarrow \Sigma^d T_d
\]
in $\cT$ with the following properties: Each $T_i$ is in $\add T$, and applying the functor $\cT( T,- )$ gives a complex
\[
  \cT( T,T_d ) \rightarrow \cdots \rightarrow \cT( T,T_0 ) \rightarrow \cT( T,X ) \rightarrow 0
\]
which is the start of the augmented minimal projective resolution of $\cT( T,X )$.
\end{Lemma}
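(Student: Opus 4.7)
The plan is to produce any $(d+2)$-angle of the required form from the cluster tilting property, apply $\cT(T,-)$ to obtain an exact complex of projectives (with $d$-rigidity killing the obstruction to surjectivity), and then refine the angle to make the resolution minimal.

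Definition \ref{def:clusterTilting}(ii) supplies a $(d+2)$-angle $T_d \to \cdots \to T_0 \to X \to \Sigma^d T_d$ with $T_i \in \add T$. Since $\cT(T,-)$ is a cohomological functor on the $(d+2)$-angulated category $\cT$, applying it yields a long exact sequence. The first term beyond $\cT(T,X)$ is $\cT(T, \Sigma^d T_d) = \Ext^d_\cT(T, T_d)$, which vanishes because $T$ is $d$-rigid (Theorem \ref{thm:cluster_tilting_v_maximal_rigid}(i)). Hence $\cT(T, T_0) \to \cT(T, X)$ is surjective and the sequence $\cT(T, T_d) \to \cdots \to \cT(T, T_0) \to \cT(T, X) \to 0$ is exact at every interior spot. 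Each $\cT(T, T_i)$ is projective by the add--proj correspondence (Remark \ref{rem:add-proj}), so this is already a projective resolution -- possibly not the minimal one.

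For minimality, $\cT$ is Krull--Schmidt (Hom-finite over a field, split idempotents), so trivial summands of the form $Y \xrightarrow{\id} Y \to 0 \to \cdots$ may be split off any $(d+2)$-angle. Working from the right, I would first arrange that $T_0 \to X$ is a \emph{minimal} right $\add T$-approximation of $X$. The hypothesis that $X$ has no non-zero summand in $\add \Sigma^d T$ is exactly what ensures no summand of $X$ is invisible to $\cT(T,-)$, so trimming $T_0$ does not lose essential data. Then inductively shrink each $T_i$ so that $T_i \to T_{i-1}$ is a minimal right $\add T$-approximation of the syzygy already constructed one step to its right. Under $\add T \simeq \proj \Gamma$, minimality in $\cT$ translates directly to minimality in $\mod \Gamma$.

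The main obstacle is justifying this inductive refinement inside a $(d+2)$-angulated category, since there is no octahedron-type axiom available to directly splice new $T_i$'s into an existing angle. A clean workaround is to proceed in reverse: take the minimal projective resolution $P_d \to \cdots \to P_0 \to \cT(T,X) \to 0$ in $\mod \Gamma$, lift each $P_i$ to $T_i \in \add T$ and each differential to a morphism in $\cT$ via the add--proj equivalence of Remark \ref{rem:add-proj}, and use Theorem \ref{thm:JJ1main} (the equivalence $\cT/\add \Sigma^d T \simeq \cD$) to identify these lifts with morphisms coming from \emph{some} cluster tilting $(d+2)$-angle supplied by Definition \ref{def:clusterTilting}(ii). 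Krull--Schmidt uniqueness then matches the two angles after stripping off trivial summands; the hypothesis on $X$ is precisely what makes the lift from $\cD$ back to $\cT$ unambiguous, so the assembled angle ends in $X$ as required.
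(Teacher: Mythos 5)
Your first paragraph is correct and agrees with the paper: take the cluster-tilting $(d+2)$-angle for $X$, apply $\cT(T,-)$, and use $\cT(T,\Sigma^d T_d)=\Ext^d_\cT(T,T_d)=0$ to get an exact complex of projectives. The gap is in the minimality step. Your ``clean workaround'' is circular: to match the lifted minimal resolution with the angle from Definition \ref{def:clusterTilting}(ii) ``by Krull--Schmidt'', you would already need to know that some $(d+2)$-angle maps onto the \emph{minimal} resolution, which is precisely what is being proved; Krull--Schmidt gives uniqueness of direct sum decompositions of objects, not an identification of a lifted complex of projectives with a $(d+2)$-angle. Your first route correctly names the obstacle (no splicing axiom) but is then abandoned. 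You also misplace the role of the hypothesis on $X$: it has nothing to do with $T_0\rightarrow X$ being a minimal approximation, but concerns the other end of the angle.

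The missing idea is elementary. A complex of projectives $P_d\rightarrow\cdots\rightarrow P_0\rightarrow M\rightarrow 0$ is the start of the augmented minimal projective resolution if and only if it is exact in the appropriate degrees and the differentials $P_i\rightarrow P_{i-1}$ are radical morphisms with $\ker(P_d\rightarrow P_{d-1})\subseteq\rad P_d$; and the functor $\cT(T,-)$, being up to equivalence a quotient functor by Theorem \ref{thm:JJ1main}, preserves radical morphisms. So one works with the rotated angle $\Sigma^{-d}X\rightarrow T_d\rightarrow\cdots\rightarrow T_0\rightarrow X$ and arranges that every morphism except the last is radical: the maps $T_i\rightarrow T_{i-1}$ become radical after splitting off trivial summands $T'\xrightarrow{\cong}T'$ (the class of $(d+2)$-angles is closed under direct summands, so no octahedron is needed), while the map $\Sigma^{-d}X\rightarrow T_d$ is automatically radical precisely because $X$ has no non-zero direct summand in $\add\Sigma^d T$. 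This last point is where the hypothesis actually enters: via the exactness of $\cT(T,\Sigma^{-d}X)\rightarrow\cT(T,T_d)\rightarrow\cT(T,T_{d-1})$ it forces $\ker\big(\cT(T,T_d)\rightarrow\cT(T,T_{d-1})\big)\subseteq\rad\cT(T,T_d)$, i.e.\ minimality at the leftmost term.
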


\begin{proof}
Given $X$, there exists a $( d+2 )$-angle 
\[
  \Sigma^{ -d }X \rightarrow T_d \rightarrow \cdots \rightarrow T_0 \rightarrow X
\]
with each $T_i$ in $\add T$ by Definition \ref{def:clusterTilting}.  Since $X$ has no non-zero direct summands in $\add \Sigma^d T$, the first morphism in the $( d+2 )$-angle is in the radical of $\cT$.  By dropping trivial summands of the form $T' \xrightarrow{ \cong } T'$, we can assume that so are the other morphisms except the last morphism.

By \cite[prop.\ 2.5(a)]{GKO}, applying the functor $\cT( T,- )$ gives an exact sequence
\[
  \cT( T,\Sigma^{ -d }X ) \rightarrow \cT( T,T_d ) \rightarrow \cdots \rightarrow \cT( T,T_0 ) \rightarrow \cT( T,X ) \rightarrow \cT( T,\Sigma^d T_d ) = 0.
\]
By Theorem \ref{thm:JJ1main}, applying the functor $\cT( T,- )$ is, up to isomorphism, just to apply a quotient functor, and this preserves radical morphisms.  So in the exact sequence each morphism, except possibly $\cT( T,T_0 ) \rightarrow \cT( T,X )$, is in the radical of $\mod \Gamma$. This proves the claim of the lemma.
\end{proof}

\begin{Lemma}\label{lem:ARtranslation}
If $X \in \cT$ has no non-zero direct summands in $\add \Sigma^d T$, then there is a natural isomorphism
\[
  \tau_d\HomT(T,X)\cong \HomT(T, \Sigma^dX).
\]
\end{Lemma}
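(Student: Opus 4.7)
The plan is to compute $\tau_d \cT(T, X)$ via a projective presentation and identify the resulting kernel with $\cT(T, \Sigma^d X)$ obtained from a shifted $(d+2)$-angle. I will use the standard higher Auslander--Reiten formula $\tau_d M \cong \Ker(\nu_\Gamma f_d)$, where $P_d \xrightarrow{f_d} P_{d-1} \to \cdots \to P_0 \to M \to 0$ is the initial segment of the minimal projective resolution of $M \in \cD$.

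First, I would invoke Lemma \ref{lem:resolution} to produce a $(d+2)$-angle
\[
  T_d \xrightarrow{f_d} T_{d-1} \to \cdots \to T_0 \to X \to \Sigma^d T_d
\]
with $T_i \in \add T$ whose image under $\cT(T,-)$ is the start of the minimal projective resolution of $M = \cT(T, X)$. By Remark \ref{rem:add-proj} each $\cT(T, T_i)$ lies in $\proj \Gamma$, and Lemma \ref{lem:Nakayama} provides a natural identification $\nu_\Gamma \cT(T, T_i) \cong \cT(T, \Sigma^{2d} T_i)$. Inserting these into the formula above yields
\[
  \tau_d \cT(T, X) \cong \Ker \bigl( \cT(T, \Sigma^{2d} T_d) \to \cT(T, \Sigma^{2d} T_{d-1}) \bigr).
\]

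Next, I would apply $\Sigma^d$ to the $(d+2)$-angle to obtain
\[
  \Sigma^d T_d \to \cdots \to \Sigma^d T_0 \to \Sigma^d X \to \Sigma^{2d} T_d,
\]
and then apply the cohomological functor $\cT(T, -)$. Iterating \cite[prop.\ 2.5(a)]{GKO} across a single rotation gives an exact sequence
\[
  \cT(T, \Sigma^d T_0) \to \cT(T, \Sigma^d X) \to \cT(T, \Sigma^{2d} T_d) \to \cT(T, \Sigma^{2d} T_{d-1}).
\]
Because $T$ is $d$-rigid and $T_0 \in \add T$, the leftmost term vanishes. The resulting isomorphism of $\cT(T, \Sigma^d X)$ with the kernel on the right matches the expression for $\tau_d \cT(T, X)$ found above. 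Naturality in $X$ propagates from the essential uniqueness of the resolution in Lemma \ref{lem:resolution} and from naturality of Lemma \ref{lem:Nakayama} together with the underlying Calabi--Yau isomorphism.

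The main obstacle will be bookkeeping rather than ideas. Lemma \ref{lem:resolution} only delivers the first $d+1$ steps of a minimal projective resolution, and I must trust that this is exactly what the higher Auslander--Reiten formula requires, with no further syzygy information needed. The iterated use of \cite[prop.\ 2.5(a)]{GKO} across a rotation in the second step is routine but must be arranged so that the exact sequence lands at the correct target.
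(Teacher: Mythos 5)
Your proposal is correct and follows essentially the same route as the paper: both compute $\tau_d\cT(T,X)$ as the kernel of $\nu_\Gamma\cT(T,T_d)\rightarrow\nu_\Gamma\cT(T,T_{d-1})$ from the resolution of Lemma \ref{lem:resolution}, identify $\nu_\Gamma\cT(T,T_i)\cong\cT(T,\Sigma^{2d}T_i)$ via Lemma \ref{lem:Nakayama}, and recognise $\cT(T,\Sigma^dX)$ as the corresponding kernel in the long exact sequence of the $\Sigma^d$-shifted $(d+2)$-angle, using $\cT(T,\Sigma^dT_0)=0$ by $d$-rigidity of $T$. No gaps; the bookkeeping you worried about is exactly the commutative ladder the paper draws.
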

\begin{proof}
As \(X\) has no non-zero direct summands in $\add \Sigma^d T$, we can consider the $( d+2 )$-angle from Lemma \ref{lem:resolution}.  Apply
\(\HomT(T,-)\) to get the following part of an augmented minimal projective resolution in \(\mod \Gamma\):
\[\HomT(T,T_d)\rightarrow \cdots \rightarrow \HomT(T,T_0)\rightarrow \HomT(T,X)\rightarrow 0.\]
Using the Nakayama functor and Lemma \ref{lem:Nakayama} we get the following commutative diagram.

\begin{center}
\begin{tikzpicture}[xscale=3, yscale=2]
\node[anchor=east] (0) at (-.5,1) {0};
\node (X) at (.5,1) {\(\tau_d\HomT(T,X)\)};
\node (Td) at (2,1) {\(\nu_\Gamma \HomT(T,T_d)\)};
\node (d1) at (3,1) {\(\cdots\)};
\node (T0) at (4,1) {\(\nu_\Gamma\HomT(T,T_0)\)};

\node[anchor=east] (SST0) at (-.5,0) {\(0\)};
\node (Y) at (.5,0) {\(\HomT(T,\Sigma^d X)\)};
\node (STd) at (2,0) {\(\HomT(T,\Sigma^{ 2d } T_d)\)};
\node (d0) at (3,0) {\(\cdots\)};
\node (ST0) at (4,0) {\(\HomT(T,\Sigma^{ 2d } T_0)\)};

\draw[->]	 	(0)--(X);
\draw[->]		(X)--(Td);
\draw[->]		(Td)--(d1);
\draw[->]		(d1)--(T0);

\draw[->]	 	(SST0)--(Y);
\draw[->]		(Y)--(STd);
\draw[->]		(STd)--(d0);
\draw[->]		(d0)--(ST0);

\draw[->] (Td)-- node[sloped, above]{\(\sim\)}(STd);
\draw[->] (T0)-- node[sloped, above]{\(\sim\)}(ST0);
\end{tikzpicture}
\end{center}
The top sequence is exact by the definition of $\tau_d$, see \cite[sec.\ 1.4.1]{Iyama2}.  The bottom sequence is exact because it is obtained by applying $\Hom_{ \cT }( T,- )$ to a $(d+2)$-angle in $\cT$, see \cite[prop.\ 2.5(a)]{GKO}.  The first term of the bottom sequence is actually $\cT( T,\Sigma^d T_0 )$, but this is zero.  Since we have $d \geq 1$, the diagram implies
\[
  \tau_d\HomT(T,X) \cong \HomT(T,\Sigma^dX).
\]
\end{proof}

We write \([\add T]( X,Y ) = \{\, f \in \HomT(X,Y) \mid f \text{ factors through an object of } \add T \,\}.\)

\begin{Lemma}
\label{lem:SerreHom}
There is a natural isomorphism
\[
  \dual[\add T](X,Y)\cong\Hom_{\T/\add\Sigma^d T}(\overline Y,\overline {\Sigma^{ 2d }X})\]
for $X,Y \in \cT$.
\end{Lemma}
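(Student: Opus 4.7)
The plan is to use the $2d$-Calabi--Yau pairing to recast the lemma as an equality of subspaces inside $\cT(Y, \Sigma^{2d}X)$, and then to verify that equality using the $d$-rigidity of $T$ together with the $(d+2)$-angle supplied by the cluster tilting property. Concretely, Serre duality furnishes a natural non-degenerate pairing
\[
  \langle -,- \rangle : \cT(X,Y) \times \cT(Y, \Sigma^{2d}X) \longrightarrow k
\]
with the usual cyclicity $\langle h \circ f, g \rangle = \langle f, g \circ h \rangle$. Writing $(-)^{\perp}$ for the annihilator of a subspace under this pairing and dualising the short exact sequence $0 \to [\add T](X,Y) \to \cT(X,Y) \to \cT(X,Y)/[\add T](X,Y) \to 0$, the lemma is reduced to the identity
\[
  [\add T](X,Y)^{\perp} \;=\; [\add \Sigma^d T](Y, \Sigma^{2d}X),
\]
since by the definition of the naive quotient $\cT/\add \Sigma^d T$ the right-hand side of the claimed isomorphism is precisely $\cT(Y, \Sigma^{2d}X) / [\add \Sigma^d T](Y, \Sigma^{2d}X)$.

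For the inclusion $\supseteq$ I would take $g = \gamma \circ \beta$ with $\beta : Y \to \Sigma^d T'$, $\gamma : \Sigma^d T' \to \Sigma^{2d}X$ and $T' \in \add T$, and pair it with $f = b \circ a$ for $a : X \to Z$, $b : Z \to Y$ and $Z \in \add T$. Cyclicity rewrites $\langle ba, g \rangle$ so that it is controlled by $\beta \circ b \in \cT(Z, \Sigma^d T') = \Ext^d_{\cT}(Z, T')$, and this $\Ext$ group vanishes because $T$ is $d$-rigid (a consequence of being Oppermann--Thomas cluster tilting); hence $g \in [\add T](X,Y)^{\perp}$.

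For the harder inclusion $\subseteq$, suppose $g \in [\add T](X,Y)^{\perp}$. A further application of cyclicity combined with the non-degeneracy of Serre duality for $\cT(X,Z) \times \cT(Z, \Sigma^{2d}X)$ converts the hypothesis into the pointwise condition that $g \circ b = 0$ for every $b : Z \to Y$ with $Z \in \add T$. I would then feed in the $(d+2)$-angle $T_d \to \cdots \to T_0 \xrightarrow{\alpha} Y \xrightarrow{\beta'} \Sigma^d T_d$ produced by Definition \ref{def:clusterTilting}: the hypothesis forces $g \circ \alpha = 0$, and exactness of $\cT(-, \Sigma^{2d}X)$ applied to this $(d+2)$-angle (see \cite[prop.\ 2.5(a)]{GKO}) produces a factorisation $g = \gamma' \circ \beta'$ for some $\gamma' : \Sigma^d T_d \to \Sigma^{2d}X$, placing $g$ in $[\add \Sigma^d T](Y, \Sigma^{2d}X)$. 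Naturality of the resulting isomorphism in $X$ and $Y$ is inherited from the naturality of Serre duality. The main obstacle is exactly this $\subseteq$ step, where one has to promote a collection of pointwise vanishing conditions into an actual factorisation, and it is precisely the cluster tilting resolution together with the $d$-rigidity of $T$ that makes this possible.
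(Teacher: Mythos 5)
Your proof is correct, and it rests on the same two pillars as the paper's --- the $(d+2)$-angle $T_d\to\cdots\to T_0\to Y\to\Sigma^d T_d$ supplied by Definition \ref{def:clusterTilting} together with exactness of the $\Hom$ functors on it, and the $2d$-Calabi--Yau duality --- but it organises them differently. The paper identifies $[\add T](X,Y)$ as the image of $\Psi:\cT(X,T_0)\to\cT(X,Y)$ and $[\add\Sigma^d T](Y,\Sigma^{2d}X)$ as the image of the corresponding map $\alpha'$, each time via a ``complete the square to a morphism of $(d+2)$-angles'' lifting argument, and then dualises the whole exact sequence to match $\dual\im\Psi$ with $\im\dual\Psi$. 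You instead phrase everything inside the single space $\cT(Y,\Sigma^{2d}X)$ and prove the annihilator identity $[\add T](X,Y)^{\perp}=[\add\Sigma^d T](Y,\Sigma^{2d}X)$: your inclusion $\supseteq$ is bought by the $d$-rigidity of $T$ (which the paper's proof never invokes explicitly), and your inclusion $\subseteq$ replaces the second lifting argument by non-degeneracy of the pairing plus the factorisation of $g$ through $\beta':Y\to\Sigma^d T_d$ coming from exactness of $\cT(-,\Sigma^{2d}X)$ on the angle. The trade-off is that your route needs the trace/cyclicity property $\langle b\circ a,g\rangle=\langle a,g\circ b\rangle$ of the Serre pairing as an explicit input (it is exactly the naturality of the Calabi--Yau isomorphism in the middle variable, so this is harmless), whereas the paper only uses the duality object-wise and pushes all the work into the two image computations; your version arguably makes it more transparent \emph{why} the two ideals are exchanged under duality.
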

\begin{proof}
Pick a \((d+2)\)-angle in \(\T\):
\[T_d\rightarrow \ldots\rightarrow T_0\rightarrow Y\rightarrow \Sigma^d T_d,\] with \(T_i\in\add T\). Use \(\HomT(X,-)\) to obtain the morphism \(\Psi :\HomT (X, T_0)\rightarrow \HomT(X,Y)\). This is a homomorphism of $k$-vector spaces, hence we can talk about the image of \(\Psi\). We first note that any morphism \(f\) in the image of \(\Psi\) must factor through \(\add T\). Now suppose \(f\in\HomT(X,Y)\) factors through \(T'\in\add T\). We have the following commutative diagram, where the lower row is a part of the \((d+2)\)-angle above:
\begin{center}
\begin{tikzpicture}[scale=2]
\node (d0) at (-1, 0) {\(\cdots\)};
\node (T0) at (0,0) {\(T_0\)};
\node (Y) at (1,0) {\(Y\)};
\node (ST) at (2,0) {\(\Sigma^d T_d\).};

\node (d1) at (-1, 1) {\(\cdots\)};
\node (Ta) at (0,1) {\(T'\)};
\node (Tb) at (1,1) {\(T'\)};
\node (0) at (2,1) {\(0\)};

\node (X) at (1,2) {\(X\)};

\draw[->] (d1)--(Ta);
\draw[->] (Ta)-- node[above, near end]{\(1_{T'}\)}(Tb);
\draw[->] (Tb)--(0);
\draw[->] (d0)--(T0);
\draw[->] (T0)--(Y);
\draw[->] (Y)--(ST);

\draw[->] (X)--(Tb);
\draw[->] (X)--(Ta);
\draw[->] (Tb)--(Y);
\draw[->] (0)--(ST);
\draw[->] (X) to[bend left] node[right, near start]{\(f\)} (Y);

\draw[->, dashed] (Ta)--(T0);
\end{tikzpicture}
\end{center}
The dashed arrow exists by completing the commutative square to a morphism of \((d+2)\)-angles.
We conclude that \(f\in\im\Psi\). Hence \[\im \Psi=[\add T](X,Y).\]

We now return to the long exact sequence
\[
  \cdots \rightarrow \HomT(X,T_0)\xrightarrow{\Psi} \HomT(X,Y)\rightarrow \HomT(X,\Sigma^dT_d)\rightarrow \cdots.
\]
Using the duality functor \(\dual\) and Serre duality we get the following diagram with exact rows:
\begin{center}
\begin{tikzpicture}[xscale=6, yscale=2]
\node (DTd) at (0,1) {\(\dual\!\HomT(X,\Sigma^dT_d)\)};
\node (DY) at (1,1) {\(\dual\!\HomT(X,Y)\)};
\node (DT0) at (2,1) {\(\dual\!\HomT(X,T_0)\)};

\node(STd) at (0,0) {\(\HomT(\Sigma^dT_d,\Sigma^{ 2d }X)\)};
\node(SY) at (1,0) {\(\HomT(Y,\Sigma^{ 2d }X)\)};
\node(ST0) at (2,0) {\(\HomT(T_0,\Sigma^{ 2d }X)\)};

\draw[->] (DTd)--(DY);
\draw[->] (DY)-- node[above]{\(\scriptstyle{\dual\!\Psi}\)}(DT0);
\draw[->] (STd)--node[above]{$\scriptstyle{\alpha'}$}(SY);
\draw[->] (SY)--node[above]{\(\scriptstyle{\beta'}\)}(ST0);

\draw[->] (DTd)-- node[sloped, above]{\(\sim\)}(STd);
\draw[->] (DY)-- node[sloped, above]{\(\sim\)}(SY);
\draw[->] (DT0)-- node[sloped, above]{\(\sim\)}(ST0);

\node (Im1) at (.5, -1) {\([\add \Sigma^d T](Y,\Sigma^{ 2d }X)\)};
\node (Im2) at (1.5, -1) {\(\HomT(Y,\Sigma^{ 2d }X)/[\add \Sigma^d T](Y,\Sigma^{ 2d }X)\)};

\draw[->] (STd)--(Im1);
\draw[->] (Im1)-- node[above]{\(\scriptstyle{\alpha}\)} (SY);
\draw[->] (SY)-- node[above]{\(\scriptstyle{\beta}\)} (Im2);
\draw[->] (Im2)--(ST0);
\end{tikzpicture}
\end{center}
Analogous to the above discussion, the space \([\add \Sigma^d T](Y,\Sigma^{ 2d }X)\) is the image of the map $\alpha'$. Hence \(\alpha\) is the kernel of \(\beta'\) and \(\dual\!\Psi\) (by isomorphism).  The morphism \(\beta\) is by definition the cokernel of  \(\alpha\), and \(\HomT(Y,\Sigma^{ 2d }X)/[\add \Sigma^d T](Y,\Sigma^{ 2d }X)\)  is thus the image of \(\dual\!\Psi\). Thus we have 
\[
  \dual[\add T](X,Y)
  \cong \dual\im\Psi 
  \cong \im \dual\!\Psi
  \cong \HomT(Y,\Sigma^{ 2d }X)/[\add \Sigma^d T](Y,\Sigma^{ 2d }X)
  \cong\Hom_{\T/\add\Sigma^d T}(\overline Y,\overline{\Sigma^{ 2d }X}).
\]
\end{proof}

\begin{Lemma}\label{lem:ARformulaQuotient}
Suppose \(X,Y\in \T\).Then we have a short exact sequence
\[
0
\rightarrow \dual\!\Hom_{\T/\add\Sigma^d T}(\overline Y,\overline{\Sigma ^d X})
\rightarrow \Ext^d_\T(X,Y)
\rightarrow \Hom_{\T/\add\Sigma^d T}(\overline X,\overline{\Sigma ^d Y})
\rightarrow 0.
\]
 \end{Lemma}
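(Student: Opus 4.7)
The approach is to exhibit the stated sequence as the tautological short exact sequence defining $\Hom$ in the quotient category $\T/\add \Sigma^d T$, after identifying its middle term with $\Ext^d_\T(X,Y)$ and its kernel via Lemma \ref{lem:SerreHom}.

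First, by definition of the quotient category $\T / \add \Sigma^d T$, there is a short exact sequence of $k$-vector spaces
\[
0 \rightarrow [\add \Sigma^d T](X, \Sigma^d Y) \rightarrow \HomT(X, \Sigma^d Y) \rightarrow \Hom_{\T/\add \Sigma^d T}(\overline{X}, \overline{\Sigma^d Y}) \rightarrow 0,
\]
in which the middle term is precisely $\Ext^d_\T(X, Y)$ and the right-hand map is the canonical quotient map $f \mapsto \overline{f}$.  This already accounts for the middle and right entries of the desired sequence.

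Next I would identify the kernel.  Since $\Sigma^d$ is an autoequivalence of $\T$, a morphism $X \rightarrow \Sigma^d Y$ factors through an object of $\add \Sigma^d T$ if and only if its image under $\Sigma^{-d}$ factors through an object of $\add T$.  Hence $\Sigma^{-d}$ induces an isomorphism
\[
  [\add \Sigma^d T](X, \Sigma^d Y) \cong [\add T](\Sigma^{-d}X, Y).
\]
Now apply Lemma \ref{lem:SerreHom} with $X$ replaced by $\Sigma^{-d}X$ to obtain
\[
  \dual [\add T](\Sigma^{-d}X, Y) \cong \Hom_{\T/\add \Sigma^d T}(\overline{Y}, \overline{\Sigma^{2d}\Sigma^{-d} X}) = \Hom_{\T/\add \Sigma^d T}(\overline{Y}, \overline{\Sigma^d X}).
\]
Since all Hom-spaces in $\T$, and hence in $\T/\add \Sigma^d T$, are finite-dimensional, applying $\dual$ to both sides yields
\[
  [\add T](\Sigma^{-d}X, Y) \cong \dual \Hom_{\T/\add \Sigma^d T}(\overline{Y}, \overline{\Sigma^d X}).
\]
Combining with the previous isomorphism identifies the kernel of the tautological sequence with $\dual \Hom_{\T/\add \Sigma^d T}(\overline{Y}, \overline{\Sigma^d X})$, completing the proof.

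No step in this plan is particularly delicate.  The only point to verify carefully is the compatibility of $\Sigma^{-d}$ with the ideal $[\add \Sigma^d T]$, which is immediate since $\Sigma^d$ is an autoequivalence of $\T$ carrying $\add T$ to $\add \Sigma^d T$.  Naturality of the three maps, if desired, follows from naturality of the quotient functor and of the isomorphism in Lemma \ref{lem:SerreHom}.
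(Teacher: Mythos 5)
Your proposal is correct and follows essentially the same route as the paper's proof: the tautological quotient sequence for $\Hom_{\T/\add\Sigma^d T}(\overline X,\overline{\Sigma^d Y})$, the identification $[\add\Sigma^d T](X,\Sigma^d Y)\cong[\add T](\Sigma^{-d}X,Y)$, and Lemma \ref{lem:SerreHom} applied to $\Sigma^{-d}X$. Your extra remark about dualizing back using $\Hom$-finiteness only makes explicit a step the paper leaves implicit.
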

\begin{proof}
By the definition of the quotient functor we have a short exact sequence
\[0
\rightarrow [\add\Sigma^d T](X,\Sigma^d Y)
\rightarrow\HomT(X,\Sigma^dY)
\rightarrow \Hom_{\T/\add\Sigma^d T}(\overline X,\overline{\Sigma ^d Y})
\rightarrow 0.\]

We have \([\add\Sigma^d T](X,\Sigma^d Y)\cong[\add T](\Sigma^{-d}X, Y)\). By Lemma \ref{lem:SerreHom} we have 
\[
  [\add T](\Sigma^{-d}X, Y) \cong \dual\!\Hom_{\T/\add\Sigma^d T}(\overline Y,\overline{\Sigma^{2d}\Sigma ^{-d} X}) \cong \dual\!\Hom_{\T/\add\Sigma^d T}(\overline Y,\overline{\Sigma ^d X}).
\]

We also know that \(\HomT(X,\Sigma^dY)\cong \Ext^d_\T(X,Y)\), so the conclusion follows.
\end{proof}

\begin{Lemma}
\label{lem:ARformulaModule}
Suppose \(X,Y\in\T\) have no non-zero direct summands in \(\add \Sigma^d T\). Then we have a short exact sequence
\[
0\rightarrow
\dual\!\Hom_\Gamma \big( \HomT(T,Y),\tau_d\HomT(T,X) \big)
\rightarrow \Ext_\T^d (X,Y) \rightarrow 
\Hom_\Gamma \big( \HomT(T,X),\tau_d\HomT(T,Y) \big)
\rightarrow 0.
\]
\end{Lemma}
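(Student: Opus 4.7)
The plan is to deduce the short exact sequence from Lemma~\ref{lem:ARformulaQuotient} by rewriting both end terms in terms of $\Hom_\Gamma$ and $\tau_d$, using the equivalence of Theorem~\ref{thm:JJ1main} together with Lemma~\ref{lem:ARtranslation}.

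First I would apply Lemma~\ref{lem:ARformulaQuotient} directly to obtain the short exact sequence
\[
0 \to \dual\!\Hom_{\T/\add\Sigma^d T}(\overline Y,\overline{\Sigma^d X}) \to \Ext^d_\T(X,Y) \to \Hom_{\T/\add\Sigma^d T}(\overline X,\overline{\Sigma^d Y}) \to 0.
\]
This holds for arbitrary $X,Y\in\T$, so no hypothesis on direct summands is used at this stage.

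Next I would invoke Theorem~\ref{thm:JJ1main}, which says that the functor $\HomT(T,-)$ factors through an equivalence $\T/\add\Sigma^d T\xrightarrow{\sim}\D$ sending $\overline Z$ to $\HomT(T,Z)$. Applying this equivalence to the pairs $(\overline Y,\overline{\Sigma^d X})$ and $(\overline X,\overline{\Sigma^d Y})$ yields natural isomorphisms
\[
\Hom_{\T/\add\Sigma^d T}(\overline Y,\overline{\Sigma^d X}) \cong \Hom_\Gamma\!\big(\HomT(T,Y),\HomT(T,\Sigma^d X)\big),
\]
and similarly for the other pair, identifying the $\Hom$ spaces in the quotient category with $\Hom_\Gamma$ spaces between the corresponding objects of $\D$.

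Finally I would use Lemma~\ref{lem:ARtranslation}, which is precisely where the hypothesis that $X$ and $Y$ have no non-zero direct summands in $\add\Sigma^d T$ enters. It produces natural isomorphisms $\HomT(T,\Sigma^d X)\cong\tau_d\HomT(T,X)$ and $\HomT(T,\Sigma^d Y)\cong\tau_d\HomT(T,Y)$, and substituting these into the short exact sequence from the first step (via the identifications of the second step, and dualising on the left) gives exactly the asserted sequence. I do not expect a serious obstacle: the only delicate point is verifying that the replacements are compatible with the connecting maps in the sequence, but this is immediate because both the equivalence of Theorem~\ref{thm:JJ1main} and the isomorphisms of Lemma~\ref{lem:ARtranslation} are natural in their arguments.
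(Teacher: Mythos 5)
Your proposal is correct and follows essentially the same route as the paper's own proof: apply Lemma~\ref{lem:ARformulaQuotient}, identify the quotient-category $\Hom$ spaces with $\Hom_\Gamma$ spaces via the equivalence of Theorem~\ref{thm:JJ1main}, and then replace $\HomT(T,\Sigma^d X)$ and $\HomT(T,\Sigma^d Y)$ by $\tau_d\HomT(T,X)$ and $\tau_d\HomT(T,Y)$ using Lemma~\ref{lem:ARtranslation}, which is indeed the only place the hypothesis on direct summands is needed.
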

\begin{proof}
Consider the short exact sequence from Lemma \ref{lem:ARformulaQuotient}.
By Theorem \ref{thm:JJ1main} we know that
\[
  \dual\!\Hom_{\T/\add\Sigma^d T} ( \overline Y,\overline{\Sigma ^d X} )\cong 
\dual\!\Hom_\Gamma \big( \HomT(T,Y),\HomT(T,\Sigma^dX) \big).
\]
Applying Lemma \ref{lem:ARtranslation} we have 
\[
\dual\!\Hom_\Gamma \big( \HomT(T,Y),\HomT(T,\Sigma^dX) \big) \cong \dual\!\Hom_\Gamma \big( \HomT(T,Y),\tau_d\HomT(T,X) \big).
\]

Similarly we can show \(\Hom_{\T/\add\Sigma^d T}(\overline X,\overline{\Sigma ^d Y})\cong 
\Hom_\Gamma \big( \HomT(T,X),\tau_d\HomT(T,Y) \big)\).
\end{proof}

The map defined next will eventually induce the equivalence of Theorem B.

\begin{Definition}
\label{def:decomposition}
For each $X \in \cT$, pick an isomorphism \(X \cong X'\oplus X''\) such that \(X'\) has no non-zero direct summands in \(\add \Sigma^d T\) and \(X''\in\add\Sigma^dT\).  Let
\[
  \Delta( X ) = \big( \HomT( T,X' ),\HomT( T,\Sigma^{-d}X'' ) \big).
\]  
This is a pair of $\Gamma$-modules where $\HomT( T,X' )$ is in $\cD$ and \(\HomT(T,\Sigma^{-d}X'')\) is in $\proj \Gamma$.
\end{Definition}

\begin{Proposition}
\label{pro:dimFormula}
Given \(X,Y\in \T\), set $( M,P ) = \Delta( X )$ and $( N,Q ) = \Delta( Y )$, where $\Delta$ is the map in Definition \ref{def:decomposition}.  Then
\begin{align*}
\dim_k \Ext^d_\T(X,Y) =
&\dim_k \Hom_\Gamma ( M,\tau_d N )
+\dim_k \Hom_\Gamma ( N,\tau_d M ) \\
&+\dim_k \Hom_\Gamma ( P,N )
+\dim_k \Hom_\Gamma ( Q,M ).
\end{align*}
\end{Proposition}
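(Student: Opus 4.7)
The plan is to write $X \cong X' \oplus X''$ and $Y \cong Y' \oplus Y''$ as in Definition~\ref{def:decomposition}, use the biadditivity of $\Ext^d_\cT$ to decompose
\[
\Ext^d_\cT(X,Y) \cong \Ext^d_\cT(X',Y') \oplus \Ext^d_\cT(X',Y'') \oplus \Ext^d_\cT(X'',Y') \oplus \Ext^d_\cT(X'',Y''),
\]
and then evaluate each summand separately, matching each with one of the four terms in the desired dimension formula.

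For the first summand, since both $X'$ and $Y'$ have no non-zero direct summands in $\add \Sigma^d T$, Lemma~\ref{lem:ARformulaModule} applies directly and yields
\[
\dim_k \Ext^d_\cT(X',Y') = \dim_k \Hom_\Gamma(M,\tau_d N) + \dim_k \Hom_\Gamma(N,\tau_d M),
\]
since dimensions are additive over short exact sequences and $\dim_k \dual V = \dim_k V$. This covers the first two terms of the formula.

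For the mixed summand $\Ext^d_\cT(X',Y'')$, write $Y'' = \Sigma^d(\Sigma^{-d}Y'')$ with $\Sigma^{-d}Y'' \in \add T$. Then
\[
\Ext^d_\cT(X',Y'') \cong \cT(X',\Sigma^{2d}(\Sigma^{-d}Y'')) \cong \dual\!\cT(\Sigma^{-d}Y'',X')
\]
by the $2d$-Calabi--Yau property. Since $\Sigma^{-d}Y'' \in \add T$, the functor $\cT(T,-)$ induces a natural isomorphism $\cT(\Sigma^{-d}Y'',X') \cong \Hom_\Gamma(\cT(T,\Sigma^{-d}Y''),\cT(T,X')) = \Hom_\Gamma(Q,M)$ (this is the usual Yoneda-type extension of the $\add$-$\proj$ equivalence of Remark~\ref{rem:add-proj} from equivalences on $\add T$ to $\cT(T,-)$-representability for a general second argument). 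The symmetric summand $\Ext^d_\cT(X'',Y')$ is handled analogously: writing $X'' = \Sigma^d(\Sigma^{-d}X'')$ we get $\Ext^d_\cT(X'',Y') \cong \cT(\Sigma^{-d}X'',Y') \cong \Hom_\Gamma(P,N)$ by the same principle, no Calabi--Yau duality needed this time. These account for the last two terms of the formula.

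Finally, $\Ext^d_\cT(X'',Y'') \cong \Ext^d_\cT(\Sigma^{-d}X'',\Sigma^{-d}Y'') = 0$ because both arguments lie in $\add T$ and $T$ is $d$-rigid as part of being Oppermann--Thomas cluster tilting. Summing the four contributions gives exactly the claimed equality. The only subtle point, and hence the main obstacle, is the clean justification of the isomorphism $\cT(A,B) \cong \Hom_\Gamma(\cT(T,A),\cT(T,B))$ for $A \in \add T$ and arbitrary $B \in \cT$; it follows from the case $A = T$ (Yoneda) by additivity and retraction, but needs to be invoked explicitly since the paper only records the $\add$-$\proj$ equivalence between $\add T$ and $\proj \Gamma$.
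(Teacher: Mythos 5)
Your proposal is correct and follows essentially the same route as the paper: decompose via biadditivity, kill $\Ext^d_\cT(X'',Y'')$ by $d$-rigidity of $T$, apply Lemma~\ref{lem:ARformulaModule} to the $(X',Y')$ term, and convert the two mixed terms into $\Hom_\Gamma(P,N)$ and $\dual\Hom_\Gamma(Q,M)$ using the Calabi--Yau property and the isomorphism $\cT(A,B)\cong\Hom_\Gamma(\cT(T,A),\cT(T,B))$ for $A\in\add T$. The ``subtle point'' you flag is exactly what the paper handles by citing \cite[Lemma 2.2(i)]{JJ}, so nothing is missing.
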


\begin{proof}
By additivity of \(\Ext\) we have
\begin{align*}
 \Ext^d_\T(X,Y)&\cong\Ext^d_\T(X'\oplus X'',Y'\oplus Y'')\\
&\cong\Ext^d_\T(X',Y')\oplus\Ext^d_\T(X',Y'')\oplus\Ext^d_\T(X'',Y')\oplus\Ext^d_\T(X'',Y'').
\end{align*}
As \(T\) is $d$-rigid, we see that \(\Ext^d_\T(X'',Y'')=0\), and hence we have 
\begin{equation}
\label{equ:dimFormula1}
  \dim\Ext^d_\T(X,Y)=\dim\Ext^d_\T(X',Y')+\dim\Ext^d_\T(X',Y'') + \dim\Ext^d_\T(X'',Y').
\end{equation}
From Lemma \ref{lem:ARformulaModule} we have the short exact sequence:
\[
0\rightarrow
\dual\!\Hom_\Gamma \big( \HomT(T,Y'),\tau_d\HomT(T,X') \big)
\rightarrow \Ext_\T^d (X',Y') \rightarrow 
\Hom_\Gamma \big( \HomT(T,X'),\tau_d\HomT(T,Y') \big)
\rightarrow 0,
\]
which means that 
\begin{align}
\nonumber
  \dim \Ext_\T^d (X',Y')
  & = \dim_k \Hom_\Gamma \big( \HomT(T,X'),\tau_d\HomT(T, Y') \big) + \dim_k \Hom_\Gamma \big(\HomT(T,Y'),\tau_d\HomT(T, X') \big) \\
\label{equ:dimFormula2}
  & = \dim_k \Hom_{ \Gamma }( M,\tau_d N ) + \dim_k \Hom_{ \Gamma }( N,\tau_d M ).
\end{align}
We see that 
\[
\Ext_\T^d (X'',Y')\cong\HomT(X'',\Sigma^dY')\cong \HomT(\Sigma^{-d}X'',Y')\cong \Hom_\Gamma \big( \HomT(T,\Sigma^{-d}X''),\HomT(T, Y') \big) \cong \Hom_{ \Gamma }( P,N ).
\]
The third isomorphism follows from \cite[Lemma 2.2(i)]{JJ} and the fact that  \(\Sigma^{-d}X''\in\add T\).
Similarly, 
\[
  \Ext_\T^d (X',Y'')
  \cong \dual\! \Ext_\T^d (Y'',X')
  \cong \dual\! \Hom_\Gamma ( Q,M ).
\]
Thus we have
\begin{align}
\label{equ:dimFormula3}
  \dim \Ext_\T^d (X'',Y')&=\dim_k \Hom_\Gamma ( P,N ) \\
\label{equ:dimFormula4}
  \dim \Ext_\T^d (X',Y'')&=\dim_k \Hom_\Gamma ( Q,M ).
\end{align}
Substituting \eqref{equ:dimFormula2}, \eqref{equ:dimFormula3}, and \eqref{equ:dimFormula4} into \eqref{equ:dimFormula1} gives the result.
\end{proof}

As a consequence we have:

\begin{Corollary}
\label{cor:dimFormula_consequence}
Given \(X,Y\in \T\), set $( M,P ) = \Delta( X )$ and $( N,Q ) = \Delta( Y )$.  Then
\[
  \Ext^d_\T(X,Y) = 0
  \Leftrightarrow
  \Hom_\Gamma ( M,\tau_d N ) = \Hom_\Gamma ( N,\tau_d M ) =\Hom_\Gamma ( P,N ) = \Hom_\Gamma ( Q,M ) = 0.
\]
\end{Corollary}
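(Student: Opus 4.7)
The corollary is essentially immediate from Proposition \ref{pro:dimFormula}, so my plan is just to observe how the equivalence falls out of the dimension formula established there. All four Hom spaces on the right are finite-dimensional $k$-vector spaces (since $\cT$ is $\Hom$-finite and $\cD \subseteq \mod \Gamma$ consists of finitely generated modules), as is $\Ext^d_{\cT}(X,Y)$, so dimensions make sense throughout.

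First I would invoke Proposition \ref{pro:dimFormula} to write
\begin{align*}
\dim_k \Ext^d_\T(X,Y) =
&\dim_k \Hom_\Gamma(M,\tau_d N)
+\dim_k \Hom_\Gamma(N,\tau_d M) \\
&+\dim_k \Hom_\Gamma(P,N)
+\dim_k \Hom_\Gamma(Q,M).
\end{align*}
Since each of the four terms on the right is a non-negative integer, their sum vanishes if and only if each term vanishes individually. I would then conclude using the elementary fact that a finite-dimensional $k$-vector space is zero precisely when its dimension is zero: thus $\Ext^d_{\cT}(X,Y) = 0$ if and only if $\dim_k \Ext^d_{\cT}(X,Y) = 0$, and each $\Hom_\Gamma(-,-)$ on the right is zero iff its dimension is zero.

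There is no real obstacle here; the corollary is the ``vanishing'' shadow of the additive formula. The only thing to be slightly careful about is that $M, N, P, Q$ as produced by $\Delta$ depend on a choice of decomposition $X \cong X' \oplus X''$ with $X'' \in \add \Sigma^d T$ (and similarly for $Y$), but the resulting pairs are well-defined up to isomorphism, which is all that is needed since $\Hom$ spaces and their dimensions are isomorphism invariants. Hence the equivalence is entirely canonical, and the proof amounts to a single sentence after citing Proposition \ref{pro:dimFormula}.
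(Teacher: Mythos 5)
Your proposal is correct and matches the paper exactly: the paper presents the corollary as an immediate consequence of Proposition \ref{pro:dimFormula}, relying on the same observation that a sum of non-negative integers vanishes if and only if each summand does, and that a finite-dimensional vector space is zero precisely when its dimension is zero. Your extra remark about the well-definedness of $\Delta$ up to isomorphism is a sensible precaution but not needed beyond what the paper already assumes.
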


\section{Proof of Theorem B}
\label{sec:tau-TO}

The following results use the map $\Delta$ from Definition \ref{def:decomposition}.

\begin{Lemma}
\label{lem:add}
Given \(X,Y\in \T\), set $( M,P ) = \Delta( X )$ and $( N,Q ) = \Delta( Y )$.  Then $Y \in \add X$ if and only if $N \in \add M \mbox{ and } Q \in \add P$.
\end{Lemma}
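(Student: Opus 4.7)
The plan is to reduce the statement $Y\in\add X$ to the conjunction $Y'\in\add X'$ and $Y''\in\add X''$, and then match each containment with its module-theoretic analogue via the two equivalences provided by Theorem \ref{thm:JJ1main} and Remark \ref{rem:add-proj}.

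\emph{Forward direction.} Assuming $Y\in\add X$, Krull--Schmidt in the $\Hom$-finite, idempotent-complete category $\cT$ forces every indecomposable summand of $Y=Y'\oplus Y''$ to be an indecomposable summand of $X=X'\oplus X''$. By construction, the indecomposable summands of $Y'$ lie outside $\add\Sigma^d T$ and those of $Y''$ lie inside $\add\Sigma^d T$, so they come from $X'$ and $X''$ respectively. Applying $\HomT(T,-)$ to $Y'\in\add X'$ gives $N\in\add M$, and applying $\HomT(T,\Sigma^{-d}(-))$ to $Y''\in\add X''$ together with the equivalence $\add T\xrightarrow{\sim}\proj\Gamma$ of Remark \ref{rem:add-proj} gives $Q\in\add P$.

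\emph{Reverse direction.} Assume $N\in\add M$ and $Q\in\add P$. The containment $Q\in\add P$ transports through $\add T\xrightarrow{\sim}\proj\Gamma$ to $\Sigma^{-d}Y''\in\add\Sigma^{-d}X''$, and hence $Y''\in\add X''$. For $N\in\add M$, use the equivalence $\cT/\add\Sigma^d T\xrightarrow{\sim}\cD$ of Theorem \ref{thm:JJ1main}: writing $M^{\oplus n}\cong N\oplus N_0$ in $\cD$ and picking $Z_0\in\cT$ with $\HomT(T,Z_0)\cong N_0$, the equivalence yields $\overline{Y'\oplus Z_0}\cong\overline{X'^{\oplus n}}$ in $\cT/\add\Sigma^d T$. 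Deleting summands of $Z_0$ in $\add\Sigma^d T$ (which become zero in the quotient), we may assume that $Z_0$ has no non-zero summand in $\add\Sigma^d T$.

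The main technical step, and the expected main obstacle, is a lifting claim: if $A,B\in\cT$ both have no non-zero summand in $\add\Sigma^d T$ and $\overline A\cong\overline B$, then $A\cong B$ in $\cT$. To prove it, lift an isomorphism $\overline f:\overline A\to\overline B$ to $f:A\to B$ and find $g:B\to A$ so that $gf-\id_A$ and $fg-\id_B$ each factor through $\add\Sigma^d T$. For an indecomposable summand $A_i$ of $A$, any endomorphism of $A_i$ factoring through $\add\Sigma^d T$ is non-invertible, for otherwise $A_i$ would be a summand of an object of $\add\Sigma^d T$; since $\End A_i$ is local, such an endomorphism lies in $\rad\End A_i$. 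Thus the matrix of $gf$ with respect to a Krull--Schmidt decomposition of $A$ has identity diagonal modulo the radical, so $gf$ is invertible; symmetrically so is $fg$, whence $f$ is an isomorphism. Applying this to $A=Y'\oplus Z_0$ and $B=X'^{\oplus n}$ gives $Y'\oplus Z_0\cong X'^{\oplus n}$ in $\cT$, so $Y'\in\add X'$. Combining with $Y''\in\add X''$ yields $Y\in\add X$, finishing the proof.
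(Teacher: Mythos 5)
Your proof is correct and follows the same route as the paper's: decompose into $Y'\in\add X'$ and $Y''\in\add X''$ and transport these statements through the equivalences of Theorem \ref{thm:JJ1main} and Remark \ref{rem:add-proj}. The paper disposes of the step $N\in\add M\Leftrightarrow Y'\in\add X'$ by merely citing Theorem \ref{thm:JJ1main}, and your lifting claim is precisely the detail that citation suppresses; your argument for it is sound, with one small imprecision worth fixing: concluding invertibility of $gf$ from ``identity diagonal modulo the radical'' alone is not a valid inference, but your own observation applies verbatim to the off-diagonal components of $gf-\id_A$ (a morphism between indecomposables outside $\add\Sigma^d T$ that factors through $\add\Sigma^d T$ is never an isomorphism, hence is radical), so the whole matrix of $gf$ is congruent to the identity modulo the radical, which does give invertibility.
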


\begin{proof}
Let $X \cong X' \oplus X''$ be the decomposition from Definition \ref{def:decomposition}, where $X'$ has no non-zero direct summands from $\add \Sigma^d T$ while $X''$ is in $\add \Sigma^d T$.  We have $( M,P ) = \big( \cT( T,X' ),\cT( T,\Sigma^{ -d }X'' ) \big)$.  Similarly, $( N,Q ) = \big( \cT( T,Y' ),\cT( T,\Sigma^{ -d }Y'' ) \big)$.

The condition $Q \in \add P$ is equivalent to $Y'' \in \add X''$ by the \(\add\)-\(\proj\)-correspondence, (see Remark \ref{rem:add-proj}).  The condition $N \in \add M$ is equivalent to $Y' \in \add X'$ by Theorem \ref{thm:JJ1main} because \(X', Y'\) have no non-zero direct summands in \(\add \Sigma^{d}T\). The result follows.
\end{proof}

\begin{Lemma}
\label{lem:bijection}
The category \(\cT\) is skeletally small. 
The map $\Delta$ induces a bijection
\begin{equation}
\label{equ:bijection}
  \delta: \iso \cT \rightarrow \iso \cD \times \iso \proj \Gamma,
\end{equation}  
where $\iso$ denotes the set of isomorphism classes of a skeletally small category.
\end{Lemma}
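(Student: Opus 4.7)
The plan is to build $\delta$ directly on representatives via $\Delta$ and to deduce skeletal smallness of $\cT$ as a corollary of the injectivity statement. The ambient structural fact used throughout is that $\cT$ is Krull--Schmidt, being $k$-linear, $\Hom$-finite, and idempotent complete; this makes the decomposition $X\cong X'\oplus X''$ of Definition \ref{def:decomposition} unique up to isomorphism of each summand, so $\Delta$ respects isomorphism. Smallness of the target is cheap: $\Gamma$ is finite-dimensional over $k$ by $\Hom$-finiteness, hence $\mod \Gamma$, $\cD$, and $\proj \Gamma$ are all skeletally small.

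For injectivity, I will suppose $\delta([X])=\delta([Y])$ with decompositions $X\cong X'\oplus X''$ and $Y\cong Y'\oplus Y''$ as in Definition \ref{def:decomposition}. The projective component gives $\HomT(T,\Sigma^{-d}X'')\cong \HomT(T,\Sigma^{-d}Y'')$, which combined with the $\add$-$\proj$ equivalence of Remark \ref{rem:add-proj} and the autoequivalence $\Sigma^d$ yields $X''\cong Y''$. The modular component gives $\overline{X'}\cong \overline{Y'}$ in $\cT/\add \Sigma^d T$ via Theorem \ref{thm:JJ1main}. I expect the lifting of this quotient-isomorphism back to a genuine isomorphism $X'\cong Y'$ in $\cT$ to be the main obstacle. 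The strategy is the standard Krull--Schmidt trick: any endomorphism of $X'$ factoring through an object of $\add \Sigma^d T$ must lie in $\rad \End_\cT(X')$, for otherwise some indecomposable summand of $X'$ would be a direct summand of an object of $\add \Sigma^d T$, contradicting Definition \ref{def:decomposition}. Hence, if $f\colon X'\to Y'$ and $g\colon Y'\to X'$ in $\cT$ are chosen so that $\overline{f}$ and $\overline{g}$ are mutually inverse in $\cT/\add \Sigma^d T$, then $gf-1_{X'}$ and $fg-1_{Y'}$ factor through $\add \Sigma^d T$, hence are radical, making $gf$ and $fg$ automorphisms and $f$ the desired isomorphism.

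For surjectivity, given $(M,P)\in \cD\times \proj \Gamma$, I will use essential surjectivity of $\HomT(T,-)\colon \cT\to \cD$ to produce $Z\in \cT$ with $\HomT(T,Z)\cong M$. Since $T$ is $d$-rigid, $\HomT(T,-)$ annihilates $\add \Sigma^d T$, so Krull--Schmidt lets me strip off any summand of $Z$ lying in $\add \Sigma^d T$ to obtain $X'$ with $\HomT(T,X')\cong M$ and no summand in $\add \Sigma^d T$. The $\add$-$\proj$ equivalence then produces $X''\in \add \Sigma^d T$ with $\HomT(T,\Sigma^{-d}X'')\cong P$, and $X:=X'\oplus X''$ satisfies $\delta([X])=([M],[P])$. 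Finally, skeletal smallness of $\cT$ drops out as a byproduct: the injectivity argument does not presuppose smallness of $\iso \cT$, so it embeds $\iso \cT$ into the set $\iso \cD \times \iso \proj \Gamma$.
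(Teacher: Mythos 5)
Your proposal is correct and follows essentially the same route as the paper: smallness of the target via $\mod\Gamma$, injectivity componentwise using the $\add$-$\proj$ correspondence and the equivalence $\cT/\add\Sigma^d T\simeq\cD$, surjectivity from essential surjectivity of $\cT(T,-)$ plus the $\add$-$\proj$ correspondence, and skeletal smallness of $\cT$ as a byproduct of injectivity. The only difference is that you spell out the lifting of $\overline{X'}\cong\overline{Y'}$ to $X'\cong Y'$ via the radical argument, a step the paper leaves implicit by referring back to the proof of Lemma \ref{lem:add}.
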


\begin{proof}
Let $\Iso$ denote the class of isomorphisms of a category. For a skeletally small category $\cC$ we have that $\Iso \cC=\iso \cC$. Note that since a module category over a ring is skeletally small, we have that $\cD, \proj\Gamma \subseteq \mod \Gamma$ are skeletally small.

It is clear that $\Delta$ induces a well-defined map of the form 
  \[\delta': \Iso \cT \rightarrow \iso \cD \times \iso \proj \Gamma.\]

To see that $\delta'$ is injective, argue like the proof of Lemma \ref{lem:add}, replacing membership of $\add$ with isomorphism. 

It follows that $\cT$ is skeletally small. We can thus replace $\delta'$ with the map $\delta$ from (\ref{equ:bijection}).

To see that $\delta$ is surjective, let $( M,P )$ be a pair with $M \in \cD$ and $P \in \proj \Gamma$.  By Theorem \ref{thm:JJ1main} there is an object $X' \in \cT$ with no non-zero direct summands in $\add \Sigma^d T$ such that $M \cong \cT( T,X' )$.  By the add-proj correspondence, see Remark \ref{rem:add-proj}, there is an object $X'' \in \add \Sigma^d T$ such that $P \cong \cT( T,\Sigma^{ -d }X'' )$.  Setting $X = X' \oplus X''$ gives $( M,P ) \cong \Delta( X )$.
\end{proof}

\begin{Lemma}
\label{lem:main2_ii_a}
If $X \in \cT$ is $d$-self-perpendicular, then $( M,P ) = \Delta( X )$ is a maximal $\tau_d$-rigid pair.
\end{Lemma}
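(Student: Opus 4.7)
The plan is to reduce both conditions in Definition \ref{def:maximal_tau_d-rigid_pair} to the $d$-self-perpendicularity of $X$ via the bijection $\delta$ of Lemma \ref{lem:bijection} and the vanishing criterion of Corollary \ref{cor:dimFormula_consequence}. Write $X \cong X' \oplus X''$ as in Definition \ref{def:decomposition}, so that $\Delta(X) = (M,P) = \bigl(\cT(T,X'), \cT(T,\Sigma^{-d}X'')\bigr)$.

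\textbf{Condition (1).} Given $N \in \cD$, use Theorem \ref{thm:JJ1main} to choose $Y' \in \cT$ with no non-zero summand in $\add \Sigma^d T$ such that $N \cong \cT(T,Y')$; then $\Delta(Y') = (N,0)$. Apply Corollary \ref{cor:dimFormula_consequence} with $Y = Y'$: since the second entry of $\Delta(Y')$ is $0$, the condition $\Ext^d_{\cT}(X,Y') = 0$ collapses to exactly the three equalities
\[
  \Hom_\Gamma(M,\tau_d N) = \Hom_\Gamma(N,\tau_d M) = \Hom_\Gamma(P,N) = 0.
\]
By $d$-self-perpendicularity of $X$ this is equivalent to $Y' \in \add X$, and by Lemma \ref{lem:add} (using again that the second entry of $\Delta(Y')$ is $0$, hence trivially in $\add P$) this is equivalent to $N \in \add M$. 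This establishes (1).

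\textbf{Condition (2).} Given $Q \in \proj \Gamma$, use the $\add$--$\proj$ correspondence (Remark \ref{rem:add-proj}) to find $Y'' \in \add \Sigma^d T$ with $Q \cong \cT(T,\Sigma^{-d}Y'')$; then $\Delta(Y'') = (0,Q)$. Applying Corollary \ref{cor:dimFormula_consequence} with $Y = Y''$, the only surviving term is $\Hom_\Gamma(Q,M)$, so $\Ext^d_{\cT}(X,Y'') = 0$ is equivalent to $\Hom_\Gamma(Q,M) = 0$. By self-perpendicularity this is equivalent to $Y'' \in \add X$, which (since $Y'' \in \add \Sigma^d T$ forces $Y'' \in \add X''$) is equivalent via Remark \ref{rem:add-proj} to $Q \in \add P$. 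This establishes (2).

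The argument is essentially a packaging exercise; the only point that requires a little care is confirming that the ``test objects'' $Y'$ and $Y''$ constructed from $N$ and $Q$ have the right form of decomposition, i.e.\ that one of the two components of $\Delta$ is zero, so that the four-term criterion of Corollary \ref{cor:dimFormula_consequence} collapses to exactly the three (respectively one) nonzero conditions appearing in Definition \ref{def:maximal_tau_d-rigid_pair}. Once this matching is set up, the proof is immediate.
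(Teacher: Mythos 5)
Your proof is correct and follows essentially the same route as the paper's: realise the test data as $\Delta$ of an object of $\cT$, translate the Hom-vanishing conditions into $\Ext^d_\cT(X,-)=0$ via Corollary \ref{cor:dimFormula_consequence}, apply $d$-self-perpendicularity, and return via Lemma \ref{lem:add}. The only cosmetic difference is that the paper treats a general pair $(N,Q)$ at once and then specialises to $Q=0$ and $N=0$, whereas you perform the two specialisations separately from the outset.
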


\begin{proof}
Let $N \in \cD$ and $Q \in \proj \Gamma$ be given.  By Lemma \ref{lem:bijection}, there is an object $Y \in \cT$ such that $( N,Q ) \cong \Delta( Y )$.  Then
\begin{align*}
  & N \in \add M \mbox{ and } Q \in \add P \\
  & \; \Leftrightarrow Y \in \add X \\
  & \; \Leftrightarrow \Ext^d_{ \cT }( X,Y ) = 0 \\
  & \; \Leftrightarrow \Hom_\Gamma ( M,\tau_d N ) = \Hom_\Gamma ( N,\tau_d M ) =\Hom_\Gamma ( P,N ) = \Hom_\Gamma ( Q,M ) = 0,
\end{align*}
where the equivalences, respectively, are by Lemma \ref{lem:add}, Definition \ref{def:self-orthogonal}, and Corollary \ref{cor:dimFormula_consequence}.

The conditions of Definition \ref{def:maximal_tau_d-rigid_pair} are recovered by setting \(Q=0\) respectively \(N=0\) 
\end{proof}

\begin{Lemma}
\label{lem:mains2_ii_b}
Let $X \in \cT$ be given.  If $( M,P ) = \Delta( X )$ is a maximal $\tau_d$-rigid pair, then $X$ is $d$-self-perpendicular.  
\end{Lemma}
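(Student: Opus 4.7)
The plan is to translate everything across $\Delta$ using Corollary \ref{cor:dimFormula_consequence} and Lemma \ref{lem:add}, and then read off both inclusions in Definition \ref{def:self-orthogonal} directly from the defining conditions of a maximal $\tau_d$-rigid pair.

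First I would dispose of the inclusion $\add X \subseteq \{\, Y \in \cT \mid \Ext^d_\cT(X,Y)=0\,\}$, which reduces to showing that $X$ itself is $d$-rigid. Since $(M,P)$ is a maximal $\tau_d$-rigid pair, it is in particular a $\tau_d$-rigid pair in the sense of Definition \ref{def:tau_d-rigid_pair}, so $\Hom_\Gamma(M,\tau_d M) = 0$ and $\Hom_\Gamma(P,M) = 0$. Applying Corollary \ref{cor:dimFormula_consequence} with $Y = X$ (so $(N,Q) = (M,P)$) then gives $\Ext^d_\cT(X,X)=0$, and hence $\add X \subseteq \{\, Y \mid \Ext^d_\cT(X,Y) = 0 \,\}$.

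For the reverse inclusion, suppose $Y \in \cT$ satisfies $\Ext^d_\cT(X,Y) = 0$, and set $(N,Q) = \Delta(Y)$. By Corollary \ref{cor:dimFormula_consequence} this is equivalent to
\[
  \Hom_\Gamma(M,\tau_d N) = \Hom_\Gamma(N,\tau_d M) = \Hom_\Gamma(P,N) = \Hom_\Gamma(Q,M) = 0.
\]
The first three vanishings are precisely the hypotheses of Definition \ref{def:maximal_tau_d-rigid_pair}(i), so $N \in \add M$. The fourth vanishing is the hypothesis of Definition \ref{def:maximal_tau_d-rigid_pair}(ii), so $Q \in \add P$. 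By Lemma \ref{lem:add} this means $Y \in \add X$, as desired.

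There is no serious obstacle here: the previous lemmas have done the heavy lifting, and this proof is essentially a dictionary check. The only point requiring a little care is remembering that one must invoke both parts (i) and (ii) of Definition \ref{def:maximal_tau_d-rigid_pair} to conclude membership in $\add X$ via Lemma \ref{lem:add}, since Lemma \ref{lem:add} requires control of both components of $\Delta$.
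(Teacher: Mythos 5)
Your proof is correct and follows essentially the same route as the paper, which establishes the equality in Definition \ref{def:self-orthogonal} by the single chain of equivalences Corollary \ref{cor:dimFormula_consequence} $\leftrightarrow$ Definition \ref{def:maximal_tau_d-rigid_pair} $\leftrightarrow$ Lemma \ref{lem:add}. Your separate treatment of the inclusion $\add X \subseteq \{\, Y \mid \Ext^d_\cT(X,Y)=0 \,\}$ via $d$-rigidity of $X$ is valid but redundant, since every step in that chain is already a biconditional.
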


\begin{proof}
Let $Y \in \cT$ be given and set $( N,Q ) \cong \Delta( Y )$.  Then
\begin{align*}
  & \Ext^d_{ \cT }( X,Y ) = 0 \\
  & \; \Leftrightarrow \Hom_\Gamma ( M,\tau_d N ) = \Hom_\Gamma ( N,\tau_d M ) =\Hom_\Gamma ( P,N ) = \Hom_\Gamma ( Q,M ) = 0 \\
  & \; \Leftrightarrow N \in \add M \mbox{ and } Q \in \add P \\
  & \; \Leftrightarrow Y \in \add X,
\end{align*}
where the equivalences, respectively, are by Corollary \ref{cor:dimFormula_consequence}, Definition \ref{def:maximal_tau_d-rigid_pair}, and Lemma \ref{lem:add}.
\end{proof}

\begin{Theorem}
\label{thm:main2}
Recall that the map $\Delta$ from Definition \ref{def:decomposition} induces the bijection $\delta: \iso \cT \rightarrow \iso \cD \times \iso \proj \Gamma$ from Lemma \ref{lem:bijection}.
\begin{enumerate}
\setlength\itemsep{4pt}

  \item  $\delta$ restricts to a bijection
\[  
  \left\{
    \begin{array}{cc}
      \mbox{isomorphism classes of } \\
      \mbox{$d$-rigid objects in $\cT$}
    \end{array}
  \right\}
  \rightarrow
  \left\{
    \begin{array}{cc}
      \mbox{isomorphism classes of} \\
      \mbox{$\tau_d$-rigid pairs in $\cD$}       
    \end{array}
  \right\}.
\]
  
  \item  $\delta$ restricts further to a bijection
\[  
  \left\{
    \begin{array}{cc}
      \mbox{isomorphism classes of } \\
      \mbox{$d$-self-perpendicular objects in $\cT$}
    \end{array}
  \right\}
  \rightarrow
  \left\{
    \begin{array}{cc}
      \mbox{isomorphism classes of} \\
      \mbox{maximal $\tau_d$-rigid pairs in $\cD$}       
    \end{array}
  \right\}.
\]

\end{enumerate}
\end{Theorem}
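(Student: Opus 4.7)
The plan is to obtain both restrictions directly from the bijection $\delta$ of Lemma \ref{lem:bijection}, combined with the characterisations already developed in Section \ref{sec:AR-theory} and in Lemmas \ref{lem:main2_ii_a}--\ref{lem:mains2_ii_b}; essentially no new work is required, only bookkeeping.

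For part (i), I would apply Corollary \ref{cor:dimFormula_consequence} in the special case $Y = X$, so that $(N,Q) = (M,P)$. The four $\Hom$-vanishing conditions then collapse: $\Hom_\Gamma(M,\tau_d M) = 0$ appears twice and $\Hom_\Gamma(P, M) = 0$ appears twice (once as $\Hom_\Gamma(P,N)$ and once as $\Hom_\Gamma(Q,M)$). Hence
\[
  \Ext^d_\cT(X, X) = 0
  \Leftrightarrow
  \Hom_\Gamma(M, \tau_d M) = 0 \text{ and } \Hom_\Gamma(P, M) = 0,
\]
which is precisely the condition that $(M,P) = \Delta(X)$ be a $\tau_d$-rigid pair in the sense of Definition \ref{def:tau_d-rigid_pair}. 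Since $\delta$ is already a bijection by Lemma \ref{lem:bijection}, this equivalence of defining conditions shows that $\delta$ restricts to a bijection between isomorphism classes of $d$-rigid objects in $\cT$ and isomorphism classes of $\tau_d$-rigid pairs in $\cD$.

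For part (ii), the heavy lifting is already complete: Lemma \ref{lem:main2_ii_a} establishes that if $X \in \cT$ is $d$-self-perpendicular, then $\Delta(X)$ is a maximal $\tau_d$-rigid pair, and Lemma \ref{lem:mains2_ii_b} gives the converse. Combined with the bijectivity of $\delta$, this yields the restricted bijection in (ii) immediately.

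The main conceptual obstacle is not in this theorem itself but in the earlier technical results on which it rests: the computation of the $d$-Auslander--Reiten translate via $\tau_d \cT(T, X) \cong \cT(T, \Sigma^d X)$ in Lemma \ref{lem:ARtranslation}, together with Proposition \ref{pro:dimFormula} and its corollary, which convert $\Ext^d_\cT$-vanishing in $\cT$ into vanishing of the four $\Hom_\Gamma$-spaces appearing in Definition \ref{def:maximal_tau_d-rigid_pair}. Once these tools are available, the proof of Theorem \ref{thm:main2} is purely a matter of specialising and reading off.
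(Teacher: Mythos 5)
Your proposal is correct and matches the paper's own proof essentially verbatim: part (i) is exactly the specialisation of Corollary \ref{cor:dimFormula_consequence} to $Y = X$, where the four vanishing conditions collapse to the two defining a $\tau_d$-rigid pair, and part (ii) is read off from Lemmas \ref{lem:main2_ii_a} and \ref{lem:mains2_ii_b} together with the bijectivity of $\delta$. Nothing is missing.
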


\begin{proof}
(i):  Consider $X \in \cT$ and set $( M,P ) = \Delta( X )$.  Then
\[
  \Ext^d_{ \cT }( X,X ) = 0
  \Leftrightarrow
  \Hom_{ \Gamma }( M,\tau_d M ) = 0
  \mbox{ and }
  \Hom_{ \Gamma }( P,M ) = 0
\]
by Corollary \ref{cor:dimFormula_consequence}, so the result follows.

(ii): See Lemmas \ref{lem:main2_ii_a} and \ref{lem:mains2_ii_b}.
\end{proof}

{\it Proof }(of Theorem B from the introduction).
Combine Theorems \ref{thm:main2}(ii) and \ref{thm:cluster_tilting_v_maximal_rigid}(ii).
\hfill $\Box$

\section{An example}
\label{sec:example}

In this section we let $d = 3$ and $\cT = \cO_{ A_2^3 }$.  This is the $5$-angulated (higher) cluster category of type $A_2$, see \cite[def.\ 5.2, sec.\ 6, and sec.\ 8]{OT}.  The indecomposable objects can be identified with the elements of the set 
\[
  {}^{ \circlearrowleft }\mbox{\bf I}^3_9
  =
  \{\, 1357,1358,1368,1468,2468,2469,2479,2579,3579 \,\},
\]
see \cite[sec.\ 8]{OT}.  The AR quiver of $\cT$ is shown in Figure \ref{fig:AR_quiver}.
\begin{figure}
\begin{tikzpicture}[scale=3]
  \node at (0:1.0){$1357$};
  \draw[->] (10:1.0) arc (10:30:1.0);
  \node at (40:1.0){$1358$};
  \draw[->] (50:1.0) arc (50:68:1.0);
  \node at (80:1.0){$1368$};
  \draw[->] (91:1.0) arc (91:108:1.0);
  \node at (120:1.0){$1468$};
  \draw[->] (131:1.0) arc (131:150:1.0);
  \node at (160:1.0){$2468$};
  \draw[->] (170:1.0) arc (170:190:1.0);
  \node at (200:1.0){$2469$};
  \draw[->] (210:1.0) arc (210:227:1.0);
  \node at (240:1.0){$2479$};
  \draw[->] (251:1.0) arc (251:268:1.0);
  \node at (280:1.0){$2579$};
  \draw[->] (295:1.0) arc (295:310:1.0);
  \node at (320:1.0){$3579$};
  \draw[->] (330:1.0) arc (330:351:1.0);
\end{tikzpicture}
\caption{The AR quiver of the $5$-angulated category $\cT$.}
\label{fig:AR_quiver}
\end{figure}
By \cite[thm.\ 5.5 and sec.\ 8]{OT}, the object 
\[
  T = 1357 \oplus 1358 \oplus 1368 \oplus 1468
\]  
is Oppermann--Thomas cluster tilting.  

If $X,Y \in \cT$ are indecomposable objects, then
\[
  \cT( X,Y )
  =
  \left\{
    \begin{array}{cl}
      k & \mbox{ if $Y$ is $X$ or its immediate successor in the AR quiver, }\\
      0 & \mbox{ otherwise, }
    \end{array}
  \right.
\]
see \cite[prop.\ 6.1 and def.\ 6.9]{OT}.  It follows that $\Gamma = \End_{ \cT }( T ) = kQ / I$, where
\[
  Q = 1 \rightarrow 2 \rightarrow 3 \rightarrow 4
\]
and $I$ is the ideal generated by all compositions of two consecutive arrows.  The action of the functor $\cT( T,- ) : \cT \rightarrow \mod \Gamma$ on indecomposable objects is shown in Figure \ref{fig:quotient_functor}, where $P( q )$ and $I( q )$ denote the indecomposable projective and injective modules associated to the vertex $q \in Q$.  
\begin{figure}
\begingroup
\renewcommand\arraystretch{1.5}
\begin{tabular}{c|ccccccccc}
  $X$ & 1357 & 1358 & 1368 & 1468 & 2468 & 2469 & 2479 & 2579 & 3579 \\\cline{1-10}
  $\cT( T,X )$ & $P(4)$ & $P(3)$ & $P(2)$ & $P(1)$ & $I(1)$ & $0$ & $0$ & $0$ & $0$
\end{tabular}
\endgroup
\caption{The action of the functor $\cT( T,- ) : \cT \rightarrow \mod \Gamma$.}
\label{fig:quotient_functor}
\end{figure}
Note that the essential image of $\cT( T,- )$ is
\[
  \cD = \add \{\, P(4),P(3),P(2),P(1),I(1) \,\}.
\]  
This is a $3$-cluster tilting subcategory of $\mod \Gamma$ and hence it is $3$-abelian.

The $3$-suspension functor $\Sigma^3$ acts on the AR quiver by moving four steps clockwise.  Combined with our knowledge of $\Hom$, this shows that if $X$ is a fixed indecomposable object in $\cT$, then the indecomposable objects $Y$ with $\Ext^3_{ \cT }( X,Y ) \neq 0$ are precisely the two objects furthest from $X$ in the AR quiver, see Figure \ref{fig:forbidden}.
\begin{figure}
\begin{tikzpicture}[scale=3]
  \node at (0:1.0){$X$};
  \draw[->] (10:1.0) arc (10:30:1.0);
  \node at (40:1.0){$\circ$};
  \draw[->] (50:1.0) arc (50:68:1.0);
  \node at (80:1.0){$\circ$};
  \draw[->] (91:1.0) arc (91:108:1.0);
  \node at (120:1.0){$\circ$};
  \draw[->] (131:1.0) arc (131:150:1.0);
  \node at (160:1.0){$Y_1$};
  \draw[->] (170:1.0) arc (170:190:1.0);
  \node at (200:1.0){$Y_2$};
  \draw[->] (210:1.0) arc (210:227:1.0);
  \node at (240:1.0){$\circ$};
  \draw[->] (251:1.0) arc (251:268:1.0);
  \node at (280:1.0){$\circ$};
  \draw[->] (295:1.0) arc (295:310:1.0);
  \node at (320:1.0){$\circ$};
  \draw[->] (330:1.0) arc (330:351:1.0);
\end{tikzpicture}
\caption{The functor $\Ext^3_{ \cT }( X,- )$ is non-zero on $Y_1$ and $Y_2$.  It is zero on every other indecomposable object.}
\label{fig:forbidden}
\end{figure}

Based on this, we can compute all basic $3$-self-perpendicular objects in $\cT$, and by Proposition \ref{pro:indecomposables_are_rigid} they coincide with the basic maximal $3$-rigid objects in $\cT$.  For each such object $X$, there is a maximal $\tau_3$-rigid pair $\Delta( X ) = \big( \cT( T,X' ),\cT( T,\Sigma^{ -3 }X'' ) \big)$ by Theorem B.  See Figure \ref{fig:example}.
\begin{figure}
\begingroup
\renewcommand\arraystretch{1.5}
\begin{tabular}{c|c}
  Maximal $3$-rigid object $X$ & Maximal $\tau_3$-rigid pair $\Delta( X )$ \\ \cline{1-2}
  $1357 \oplus 1358 \oplus 1368 \oplus 1468$ & $( \Gamma,0 )$ \\
  $1358 \oplus 1368 \oplus 1468 \oplus 2468$ & $( \dual\!\Gamma,0 )$ \\
  $1368 \oplus 1468 \oplus 2468 \oplus 2469$ & $\big( P(2) \oplus P(1) \oplus I(1),P(4) \big)$ \\
  $1468 \oplus 2468 \oplus 2469 \oplus 2479$ & $\big( P(1) \oplus I(1),P(4) \oplus P(3) \big)$ \\
  $2468 \oplus 2469 \oplus 2479 \oplus 2579$ & $\big( I(1),P(4) \oplus P(3) \oplus P(2) \big)$ \\
  $2469 \oplus 2479 \oplus 2579 \oplus 3579$ & $( 0,\Gamma )$ \\
  $2479 \oplus 2579 \oplus 3579 \oplus 1357$ & $\big( P(4),P(3) \oplus P(2) \oplus P(1) \big)$ \\
  $2579 \oplus 3579 \oplus 1357 \oplus 1358$ & $\big( P(4) \oplus P(3),P(2) \oplus P(1) \big)$ \\
  $3579 \oplus 1357 \oplus 1358 \oplus 1368$ & $\big( P(4) \oplus P(3) \oplus P(2),P(1) \big) $ \\
  $1357 \oplus 1468 \oplus 2479            $ & $\big( P(4) \oplus P(1),P(3) \big)$ \\
  $1358 \oplus 2468 \oplus 2579            $ & $\big( P(3) \oplus I(1),P(2) \big)$ \\
  $1368 \oplus 2469 \oplus 3579            $ & $\big( P(2),P(4) \oplus P(1) \big)$
\end{tabular}
\endgroup
\caption{These are all the basic maximal $3$-rigid objects of $\cT$ and their corresponding maximal $\tau_3$-rigid pairs in $\cD$.} 
\label{fig:example}
\end{figure}
Note that the first nine objects in Figure \ref{fig:example} are Oppermann--Thomas cluster tilting, but the three last objects are not.

{\bf Acknowledgement.}
This work was supported by EPSRC grant EP/P016014/1 ``Higher Dimensional Homological Algebra''.  Karin M. Jacobsen is grateful for the hospitality of Newcastle University during her visit in October 2018.

\end{document}